\numberwithin{equation}{section}
\theoremstyle{plain}
\newtheorem{thm}{Theorem}[section]
\newtheorem{cor}[thm]{Corollary}
\newtheorem{lem}[thm]{Lemma}
\newtheorem{prop}[thm]{Proposition}
\newtheorem{hyp}{Hypothesis}
\theoremstyle{definition}
\newtheorem{defn}[thm]{Definition}
\theoremstyle{remark}
\newtheorem{rem}[thm]{Remark}
\theoremstyle{example}
\newcommand{\be}{\begin{equation}}
\newcommand{\ee}{\end{equation}}
\newcommand{\bfig}{\begin{figure}}
\newcommand{\efig}{\end{figure}}
\newcommand{\bt}{\begin{table}}
\newcommand{\et}{\end{table}}
\newcommand{\bc}{\begin{center}}
\newcommand{\ec}{\end{center}}
\newcommand{\ba}{\begin{array}}
\newcommand{\ea}{\end{array}}
\newcommand{\bes}{\begin{equation*}}
\newcommand{\ees}{\end{equation*}}
\newcommand{\norm}[1]{\left\Vert#1\right\Vert}
\def\R{\mathbb{R}}
\def\N{\mathbb{N}}
\def\P{\mathcal{P}}
\def\div{\nabla\cdot}
\def\d{\,\mathrm{d}}
\def\ird{\int_{\R^d}}
\def\irdrd{\int_{\R^d\times\R^d}}
\def\st{\, | \,}
\def\:{\colon}
\def\rst{\big\vert}
\renewcommand{\L}[1]{L^{#1}(\R^d)}
\DeclareMathOperator{\supp}{supp}
\DeclareMathOperator{\diam}{diam}
\begin{document}

\title{Existence of Compactly Supported Global Minimisers \\ for the Interaction Energy\footnote{(DOI) 10.1007/s00205-015-0852-3.}}
\author{Jos\'{e}
  A. Ca\~{n}izo\thanks{\url{j.a.canizo@bham.ac.uk}.
    School of Mathematics, Watson Building, University of Birmingham,
    Edgbaston, Birmingham B15 2TT, UK.}
  \and
  Jos\'e A. Carrillo\thanks{\url{carrillo@imperial.ac.uk}.
    Department of Mathematics, Imperial College London, South
    Kensington Campus, London SW7 2AZ, UK.}
  \and
  Francesco S. Patacchini\thanks{\url{f.patacchini13@imperial.ac.uk}.
    Department of Mathematics, Imperial College London, South
    Kensington Campus, London SW7 2AZ, UK.}}

\date{February 2015}

\maketitle


\begin{abstract}
  The existence of compactly supported global minimisers for continuum
  models of particles interacting through a potential is shown under
  almost optimal hypotheses. The main assumption on the potential is
  that it is catastrophic, or not H-stable, which is the complementary
  assumption to that in classical results on thermodynamic limits in
  statistical mechanics. The proof is based on a uniform control on
  the local mass around each point of the support of a global
  minimiser, together with an estimate on the size of the
  ``gaps'' it may have. The class of potentials
  for which we prove existence of global minimisers includes power-law
  potentials and, for some range of parameters, Morse potentials,
  widely used in applications. We also show that the support of local
  minimisers is compact under suitable assumptions.
\end{abstract}

\tableofcontents

\section{Introduction and main results}
\label{sec:introduction}

The analysis of configurations minimising nonlocal interaction
  energies is an ubiquitous question in mathematics with applications
  ranging from physics and engineering problems to mathematical
  biology and game theory in economic and social
  sciences. Understanding the balance of the effects of interactions
  between the ``particles'' in applications such as inelastic
  particles in granular flows
  \cite{BenedettoCagliotiCarrilloPulvirenti98,LT,CMV,CMV2}, molecules
  in self-assembly materials and virus structures
  \cite{Wales1995,Wales2010,Rechtsman2010,Viral_Capside}, animals in
  flock patterns in biological swarms \cite{BUKB,soccerball,CHM,
    CHM2}, and individuals' strategies in pedestrian dynamics or
  strategic preferences \cite{BC,DLR}, is of paramount importance.

  We deal with \emph{interaction potentials}: $W \: \R^d \to \R \cup
  \{+\infty\}$ is a pointwise defined and measurable function which is
  allowed to take the value $+\infty$, and whose gradient models the
  interaction force between two particles located at a distance
  $x\in\R^d$. More precisely, we regard $-\nabla W(y-x)$ as the force
  that a particle at $x$ exerts on a particle at $y$, and accordingly
  we say that $W$ is \emph{attractive} at $x \in \R^d$ when $-\nabla
  W(x) \cdot x \leq 0$, and \emph{repulsive} when $-\nabla W(x) \cdot
  x \geq 0$.  Given $N$ particles with positions $x_i\in\R^d$, for any
  $i \in \{1,\dots,N\}$, we can define the energy associated to them
  as
  $$
  E_N(x_1,\dots,x_N)=\frac1{2N^2} \sum_{i,j=1}^N W(x_i-x_j).
  $$
  The typical potentials that we have in mind are repulsive at short
  distances (i.e., for $|x| < r$ for some $r > 0$) and attractive at
  large ones, and it is the interplay of these two effects that allows
  for the existence of minimisers with interesting properties. The
  minimisers of the discrete energy should realise the most stable
  balance between the possible attractive and repulsive effects
  encoded in the interaction potential $W$. The normalisation of the
  discrete energy is done in such a way that it is kept of order one
  as $N\to \infty$. Finding global (and local) minimisers of this
  discrete energy $E_N$ is a question of major interest in
  crystallisation, where self-interaction is avoided, i.e., $W(0)=0$
  (see \cite{theil} and the references therein), but also for less
  singular potentials where the normalised discrete minima may
  converge towards some integrable density or non-atomic probability
  measure when $N\to \infty$.

  It is therefore more suitable to relax the variational problem and
  look for global minimisers of the \emph{interaction energy functional} $E \: \P(\R^d)
  \to \R \cup \{+\infty\}$, $d \geq 1$, defined on the set $\P(\R^d)$
  of probability measures on $\R^d$ by
  \begin{equation}
    \label{eq:energy}
    E(\rho) = \dfrac{1}{2} \irdrd W(x - y) \d\rho(x) \d\rho(y),
    \qquad \rho \in \P(\R^d).
  \end{equation}
  If one considers $\rho$ as a given mass distribution, then
  \eqref{eq:energy} is its total potential energy when individual
  particles interact through a pair potential $W$ (i.e., the potential
  energy of two particles with unit mass, one at $x \in \R^d$ and one
  at $y \in \R^d$, is $W(x-y)$). More precisely in the following, if
  $\rho \in \P(\R^d)$ satisfies $E(\rho) \leq E(\mu)$ for all $\mu \in
  \P(\R^d)$ we say that $\rho$ is a \emph{ground state} or
  \emph{global minimiser} of $E$ (and sometimes simply
  \emph{minimiser}). Analogously, we talk about minimisers on a subset
  $\mathcal{A} \subseteq \P(\R^d)$ when the inequality holds in
  $\mathcal{A}$.

  Let us mention that the set of global (and local) minimisers of the
  total potential energy for not too singular potentials can be very
  rich in terms of their qualitative properties depending on the
  behaviour of the potential at the origin. Actually, there are plenty
  of works reporting on the qualitative properties of critical points
  of the discrete energy in the context of collective behaviour (see
  \cite{DCBC06,BUKB,soccerball} and the references therein). For
  instance, Morse potentials were considered in \cite{DCBC06}, that
  is, potentials of the form
  \begin{equation}\label{mmm}
    W(x) = C_Re^{-\frac{|x|}{\ell_R}} - C_Ae^{-\frac{|x|}{\ell_A}}, \qquad x \in \R^d,
  \end{equation}
  with $C_R$, $C_A$ measuring the strengths of the repulsive and the
  attractive part, respectively, and $\ell_R$, $\ell_A$ being the
  typical lengths scales for repulsion and attraction,
  respectively. The authors' detailed numerical study indicates that
  these potentials lead to patterns corresponding to minimisers of the
  energy as $N\to \infty$ only in the range of parameters
  corresponding to $\ell_R < \ell_A$ and $C_R/C_A <
  \left( \ell_A / \ell_R \right)^d$. Furthermore, they noted that
  these conditions are intimately related to the classical notion of
  $H$-stability in statistical mechanics.

  Before discussing further this connection, we now introduce the set
  of hypotheses on the interaction potential needed to state our main
  results in full rigour. We always assume, without loss of
  generality, that
  \begin{hyp}
    $W$ is bounded from below by a finite constant $W_\mathrm{min} < 0$.
    \label{hyp:bfb}
  \end{hyp}
  This ensures that \eqref{eq:energy} is well defined (possibly equal
  to $+\infty$) for all $\rho \in \P(\R^d)$. Actually, in order for
  $E$ to be finite on ``nice'' probability measures $\rho$ (say,
  uniformly distributed on a ball), we also assume
  \begin{hyp}
    $W$ is locally integrable (that is, $\int_B |W| < +\infty$ for any open ball $B \subset \R^d$).
    \label{hyp:integrable}
  \end{hyp}
  In particular, Hypothesis \ref{hyp:integrable} implies that the
  potential $W$ cannot be equal to $+\infty$ on sets with positive
  Lebesgue measure. This assumption rules out interaction potentials
  which are too singular at the origin as those used in the analysis
  of the crystallisation phenomena \cite{theil,eli} (see further
  comments before Hypothesis \ref{hyp:instability} below).

Since we think of $W$ as a potential as explained above, 
it is natural to assume additionally that
\begin{hyp}
  $W$ is symmetric (that is, $W(x) = W(-x)$ for all $x \in \R^d$).
\label{hyp:sym}
\end{hyp}
It is also quite natural to assume that the potential $W$ is radial, but
since we do not need that in the following we avoid making the
assumption. 

The problem of finding global minimisers of \eqref{eq:energy} has two
fundamental invariances. First, $E$ is invariant under translations:
for all $\rho \in \P(\R^d)$ and $z \in \R^d$ we have $E(\rho) = E(T_z(\rho))$, where $T_z(\rho)$ is the push-forward of $\rho$ by the $z$-translation $T_z \: \R^d \to
  \R^d$, $x \mapsto x - z$, that is $T_z(\rho)(x)=\rho(x+z)$ for integrable densities. In
particular, any translation of a minimiser is also a minimiser, and
uniqueness (if it holds) can only be expected up to
translations. Second, if we add an arbitrary constant $D \in \R$ to
$W$, then the energy $E$ is shifted by $\frac 12 D$, and hence the
minimisation problem does not change (note that this is consistent
with the interpretation of $W$ as a potential, which is arbitrary up
to a constant).

Our main motivation for studying these minimisers has mainly come from the
recent interest in the field of collective behaviour regarding the
steady states of the aggregation equation
\begin{equation}
  \label{eq:aggregation}
  \partial_t \rho = \div (\rho (\nabla W * \rho)),
\end{equation}
where $\rho = \rho(t,x)$ is a function (possibly a measure) defined
for $t \geq 0$ and $x \in \R^d$. Since $E$ is a Lyapunov functional
for Equation \eqref{eq:aggregation} (in fact, \eqref{eq:aggregation}
is the gradient flow of $E$ with respect to the Wasserstein transport
distance \cite{CMV,CMV2,CDFLS}) its minimisers (if they exist) are natural
candidates for a steady state of \eqref{eq:aggregation}, and they are
also natural candidates to represent the typical asymptotic behaviour
of \eqref{eq:aggregation} as $t \to +\infty$. Equations of the form \eqref{eq:aggregation}
appear in granular flow \cite{LT,CMV,CMV2} and in swarming models (see \cite{BCL,BT2,KCBFL} and the references therein) as mentioned before.

It is easy to see that if a minimiser $\rho$ happens to be regular
enough then it must satisfy the corresponding \emph{Euler-Lagrange equation},
namely
\begin{equation}
  \label{eq:Euler-Lagrange}
  W * \rho = C
  \quad \text{on $\supp\rho$},
\end{equation}
for some $C \in \R$ (see Lemma \ref{lem:constant} and Remark
\ref{rem:constant-general} for a more precise
statement). Consequently, it must be a stationary state of
\eqref{eq:aggregation}, again assuming that $\rho$ is regular enough
for the right-hand side of \eqref{eq:aggregation} to be meaningful.
Equation \eqref{eq:Euler-Lagrange} also appears as a condition
satisfied by special solutions in a variety of models (for example,
flock solutions in \cite{DCBC06,CHM,CHM2,Albietal}), and it is
interesting in itself.

As we already mentioned, the authors in \cite{DCBC06} analysed numerically a discrete collective behaviour model based on the Morse potentials of the form \eqref{mmm},
and it was noticed that its large-time asymptotics seemed to depend on whether the potential satisfied a
classical condition known as \emph{stability} or \emph{H-stability} in
classical statistical mechanics \cite{FisherRuelle66,Ruelle}:
\begin{defn}[Stability]
  Take a potential $W \: \R^d \to \R \cup \{+\infty\}$ satisfying
  Hypotheses \ref{hyp:bfb} and \ref{hyp:integrable}, and assume that
  its limit at $\infty$ exists (being possibly equal to
  $+\infty$). Let us define
  \begin{equation}
    W_\infty :=
    \lim_{|x| \to \infty} W(x).
    \label{eq:Einf}
  \end{equation}
  We say that $W$ is \emph{stable} if
  \begin{equation}
    \label{eq:stability}
    E(\rho) \geq \frac12 W_\infty
    \quad \text{for all $\rho \in \P(\R^d)$.}
  \end{equation}
  Otherwise we say that $W$ is \emph{unstable}. In a similar way, we define the concept of
  stability/instability on a subset $\mathcal{A} \subseteq \P(\R^d)$
  by restricting \eqref{eq:stability} to all $\rho \in \mathcal{A}$.
  \label{defn:stability}
\end{defn}
The term \emph{catastrophic} instead of unstable is also used in part
of the statistical mechanics literature, where a common assumption is
that $W(x) \to 0$ as $|x| \to \infty$; in that case
eq. \eqref{eq:stability} translates to $E(\rho) \geq 0$ for all $\rho
\in \P(\R^d)$, which is the definition often given classically (see
for example \cite[eq. (23)]{Bavaud}). The notion of stability given in
Definition \ref{defn:stability} implies the classical notion of
stability if the potential is such that $W(0)$ is finite (see for
example \cite[Sections 3.1 and 3.2]{Ruelle}). In statistical mechanics
the classical condition of stability is motivated by the fact that, when combined with that of temperedness, it is sufficient to show the thermodynamic behaviour of
a system of particles interacting via a pairwise potential (see \cite[Theorem
  3.3.12]{Ruelle}). Indeed,
the former condition avoids that infinitely many particles collapse in a bounded
region and the latter avoids significant interaction between distant
particles.

Last but not least, the crystallisation phenomena discussed in
\cite{theil,eli} (see \cite{FM,YFS} for other related work) are
closely linked to the problem we consider, though the energy there is
minimised among configurations of a finite number of particles, and
does not include their self-interaction. The results in these
references give more detailed information about the behaviour of the
minimisers for large number of particles in particular stable
cases. Essentially, the minimisers form regular lattices which spread
and fill the whole space as the number of particles tends to
infinity. In other words, if we insist in normalising the minimisers
as probability measures, these Dirac delta minimisers tend to zero
weakly as measures as the number of particles tends to infinity. In
fact, the infimum of the interaction energy (modified as mentioned
above to exclude self-interactions) over linear combinations of Dirac
deltas is zero but is not attained.

By contrast, in our setting we need to assume that $W$ is unstable in
order to show the existence of a minimiser:
\begin{hyp}
  \label{hyp:instability}
  The limit \eqref{eq:Einf} exists and $W$ is unstable.
\end{hyp}
We also assume the following more technical hypotheses,
though some of our results do not require Hypothesis
  \ref{hyp:growth}:
\begin{hyp}
  $W$ is lower semi-continuous.
\label{hyp:lsc}
\end{hyp}

\begin{hyp}
  There is $R_6 > 0$ such that $W$ is strictly increasing on
  $\R^{k-1} \times [R_6,\infty) \times \R^{d-k}$ as a function of its
  $k$-th variable, for all $k \in \{1,\dots,d\}$.
  \label{hyp:growth}
\end{hyp}

\begin{rem}
  Notice that any potential which is radially strictly increasing
  outside some ball (i.e., for some $R_6 > 0$ we have $W(x) > W(y)$
  whenever $|x| > |y| \geq R_6$) satisfies Hypothesis \ref{hyp:growth}.
\end{rem}

\begin{rem}
  Hypothesis \ref{hyp:growth} is unnecessary in order to show
  existence of global minimisers (see the related results in
  \cite{SST}). However, some growth condition such as Hypothesis
  \ref{hyp:growth} seems to be necessary to show the uniform compactness part
  of Theorem \ref{thm:main} below (see Remark \ref{rem:uniform}). An example of growth condition which we could use here that is less restrictive (but also less intuitive) than Hypothesis \ref{hyp:growth} is given in Remark \ref{rem:growth-relaxation}.
\end{rem}
\medskip

Our main result is the following:

\begin{thm}
  \label{thm:main}
  Assume that $W\:\R^d \to \R \cup \{+\infty\}$ satisfies Hypotheses
  \ref{hyp:bfb}--\ref{hyp:growth}. Then there exists a global
  minimiser $\rho \in \P(\R^d)$ of the energy $E$. In addition, there
  exists $K > 0$ (depending only on $W$ and the dimension $d$) such
  that every minimiser of $E$ has compact support with diameter at
  most $K$.
\end{thm}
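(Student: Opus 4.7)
The strategy is to first establish structural properties of any would-be minimiser—most importantly a uniform lower bound on the local mass around every point of its support and a bound on the size of the ``gaps'' in that support—and then to exploit these estimates to turn a minimising sequence into a tight one whose weak limit, via lower semi-continuity, is a minimiser. The instability Hypothesis \ref{hyp:instability} enters through the fact that the infimum $e:=\inf_{\P(\R^d)}E$ satisfies $e<\tfrac12 W_\infty$; this strict gap is the engine preventing mass from escaping to infinity.

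For the local mass estimate I would show: there exist $r_0,m_0>0$ (depending only on $W$ and $d$) such that for every global minimiser $\rho$ and every $x\in\supp\rho$, $\rho(B(x,r_0))\geq m_0$. The proof is by contradiction and competitor construction. Fix $\varepsilon>0$ with $\tfrac12 W_\infty-e>\varepsilon$, and pick $R$ large with $W(z)\geq W_\infty-\varepsilon$ for $|z|\geq R$. Assume $\rho(B(x,r))=:m$ is arbitrarily small; split $\rho=\rho_{\mathrm{in}}+\rho_{\mathrm{out}}$ accordingly and form the competitor $\mu=\rho_{\mathrm{out}}+T_{z}(\rho_{\mathrm{in}})$ where $z$ is chosen so that the translated inner piece sits at distance much greater than $R+r$ from $\supp\rho_{\mathrm{out}}$. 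Splitting the energy integral into the four interactions and controlling them with Hypothesis \ref{hyp:bfb} for the $\rho_{\mathrm{in}}$ self-interaction and with the choice of $R$ for the cross terms shows that $E(\mu)\leq E(\rho)+m\,(\tfrac12 W_\infty-e+O(m)+O(\varepsilon))<E(\rho)$ for small enough $m$ and $\varepsilon$, a contradiction with minimality. The hypothesis $x\in\supp\rho$ ensures that the ball genuinely carries some mass one can redistribute.

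Next I would control the diameter. The local mass bound immediately implies that $\supp\rho$ is covered by at most $1/m_0$ balls of radius $r_0$, so the only way $\diam(\supp\rho)$ could be large is through wide ``gaps'' between such clusters. I would rule these out by a coordinate-wise translation argument using Hypothesis \ref{hyp:growth}: if in direction $k$ the support of $\rho$ splits into two non-empty parts $A$ and $B$ separated by a distance exceeding $2R_6$, then rigidly translating the part $B$ towards $A$ by a small amount in the $k$-th coordinate leaves the self-interactions of $A$ and $B$ unchanged while strictly decreasing every cross pair (thanks to the strict monotonicity past $R_6$), hence strictly decreasing $E(\rho)$—a contradiction. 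A quantitative version of this estimate, iterated over the finitely many cluster pairs, produces the uniform diameter bound $K$.

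Finally, for existence, given a minimising sequence $(\rho_n)$ the local mass bound (whose proof works for near-minimisers with only harmless modifications) together with translation invariance lets me translate each $\rho_n$ so that a fixed ball $B(0,r_0)$ carries mass at least $m_0$; the gap estimate then makes the sequence tight. Prokhorov yields a weakly convergent subsequence with limit $\rho_\infty\in\P(\R^d)$ (still a probability measure by tightness), and weak lower semi-continuity of $E$ (a standard consequence of Hypotheses \ref{hyp:bfb} and \ref{hyp:lsc} by applying Fatou to the truncations $\min(W,M)$ and then letting $M\to\infty$) delivers $E(\rho_\infty)\leq\liminf_n E(\rho_n)=e$, so $\rho_\infty$ is a global minimiser; the compact-support statement then applies to $\rho_\infty$. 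The main obstacle throughout is the uniform local mass estimate: because $W_\infty$ appears only as a limit, the relocated mass must be sent very far while one simultaneously controls three pairwise interaction regions and keeps the bounds independent of $\rho$—everything else in the argument is ultimately downstream of that quantitative uniformity.
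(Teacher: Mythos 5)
Your overall architecture — a uniform local-mass estimate around support points, a bound on gaps via Hypothesis \ref{hyp:growth}, then tightness and lower semi-continuity — matches the paper's strategy, and your gap estimate and the final compactness/lower semi-continuity step are essentially the paper's Lemmas \ref{lem:no-large-holes} and \ref{lem:weak-lsc}. The problem is in the step you yourself flag as ``the main obstacle,'' the local-mass estimate, where your proposed competitor does not deliver what you claim.

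You take a minimiser $\rho$, write $\rho=\rho_{\mathrm{in}}+\rho_{\mathrm{out}}$ with $\rho_{\mathrm{in}}=\rho\rst_{B(x,r)}$ of small mass $m$, and form $\mu=\rho_{\mathrm{out}}+T_z(\rho_{\mathrm{in}})$ with $z$ large. The self-energies of the two pieces are translation-invariant, so
$E(\mu)-E(\rho)=\widetilde{\mathrm{cross}}-\mathrm{cross}_\rho$, where $\mathrm{cross}_\rho=\irdrd W(y-y')\d\rho_{\mathrm{in}}(y)\d\rho_{\mathrm{out}}(y')$ and $\widetilde{\mathrm{cross}}\to m(1-m)W_\infty$ as $|z|\to\infty$. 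Minimality of $\rho$ gives $E(\mu)\geq E(\rho)$, i.e.\ the \emph{upper} bound $\mathrm{cross}_\rho\leq m(1-m)W_\infty+o(1)$. This is the \emph{wrong direction}: it never contradicts anything, since an attractive tail pulls $\mathrm{cross}_\rho$ below $mW_\infty$, not above. The inequality you write, $E(\mu)\leq E(\rho)+m\bigl(\tfrac12 W_\infty-e+O(m)+O(\varepsilon)\bigr)<E(\rho)$, cannot hold: $\tfrac12 W_\infty-e>0$ by instability and the correction terms vanish as $m,\varepsilon\to0$, so the bracket is positive for small parameters and the final strict inequality fails. (There is also a secondary issue: since you have not yet shown $\supp\rho$ is bounded, you cannot translate $\rho_{\mathrm{in}}$ ``at distance much greater than $R+r$ from $\supp\rho_{\mathrm{out}}$''; you would have to cut off a small tail of $\rho_{\mathrm{out}}$ first, which is fixable but must be done.)

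The missing ingredient is the Euler--Lagrange identity: a minimiser satisfies $W\ast\rho(x)=2E(\rho)$ for $\rho$-a.e.\ $x$, i.e.\ the potential felt at a typical support point equals $2E(\rho)<W_\infty$. This is an \emph{upper} bound on $W\ast\rho(x)$ at support points, and it is exactly what the translation competitor cannot provide (that competitor, or the one-sided variation $\mu=(1-\lambda)\rho+\lambda\delta_x$, only yields $W\ast\rho(x)\geq 2E(\rho)$). The paper derives the identity from the two-sided internal variation $\rho_\varepsilon=\rho+\varepsilon(\varphi-\int\varphi\,\d\rho)\,\rho$ (Lemma \ref{lem:constant}, following \cite[Theorem 4(i)]{BCLR2}), and then the local-mass bound is immediate: if $\rho(B(x,r))$ were tiny, the far-away mass would force $W\ast\rho(x)\geq W_{\mathrm{min}}\rho(B(x,r))+(W_\infty-\varepsilon)\rho(B(x,r)^c)\approx W_\infty$, contradicting $W\ast\rho(x)=2E(\rho)<W_\infty$. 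You need this variational identity, or an equivalent two-sided first-variation argument; the rigid-translation competitor alone does not close the loop. As a secondary remark, the paper sidesteps your ``near-minimiser'' extension by working with exact minimisers on $\P_R(\R^d)$ (which exist by compactness) and sending $R\to\infty$; if you keep the minimising-sequence route you would have to make the ``harmless modifications'' genuinely quantitative, since both key lemmas as you state them assume an exact minimiser.
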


We point out that an explicit estimate on the size of the support can
be recovered from the proof of the above theorem in Section
\ref{sec:existence}, though we do not expect it to be sharp.

Our proof of Theorem \ref{thm:main} rests on two key apriori
  estimates on minimisers. First, in Lemma
  \ref{lem:no-small-isolated-mass} we show that any point in the
  support of a minimiser needs to have at least a fixed amount $m > 0$
  of mass which is not further away than a fixed distance $r >
  0$. The intuitive reason for this is that the potential energy
  $W*\rho$ has to be constant at $\rho$-almost every point of the support of
  a minimiser $\rho$ (cf. Lemma \ref{lem:constant} and \cite[Theorem
  4(i)]{BCLR2}), and that potential energy has to be strictly less
  than the ``potential at infinity'' $W_\infty$ due to the
  instability assumption in Hypothesis \ref{hyp:instability}. Hence some
  mass has to be close to any given point $x$ in the support, since mass
  being too far away would mean that $W * \rho(x)$ is too close
  to $W_\infty$. Secondly, Lemma \ref{lem:no-large-holes} uses Hypothesis
  \ref{hyp:growth} to show that the support of a minimiser cannot have
  arbitrarily large ``gaps''. Otherwise one could bring closer the two
  parts of the minimiser separated by the gap and obtain a mass
  distribution with a smaller energy. This, together with the first estimate, shows that a minimiser has to consist of at most
  $\lceil 1/m \rceil$ pieces, each with mass at least $m$, not too far
  apart from each other, where $\lceil\cdot\rceil$ is the ceiling function. An apriori estimate on the size of the
  support is then easily obtained. 

The proof of Theorem \ref{thm:main} is then completed by a usual
  approximation argument: we consider minimisers $\rho_R$ among the set of
  measures supported in $\overline{B}(0,R)$, the closed ball of centre 0 and radius some $R$, and show that these estimates hold
  uniformly for them, which allows us to pass to the limit as $R \to
  +\infty$ to obtain a global minimiser.

We have recently learnt that Simione, Slep\v{c}ev \& Topaloglu
\cite{SST} independently proved a similar result by a different
method based on Lions' concentration compactness principle while this
paper was being prepared. Their method does
not give any estimate on the support or properties of the minimisers; on the other hand, their conditions for existence of minimisers are
slightly sharper than the ones in Theorem \ref{thm:main}. To complement their result we give a corollary which derives directly from the structure of our proof of Theorem \ref{thm:main}. It is proved in Section \ref{subsec:all-compact-support} below.
\begin{cor}
	Assume that $W\:\R^d \to \R \cup \{+\infty\}$ satisfies Hypotheses
  \ref{hyp:bfb}--\ref{hyp:lsc} (satisfied by any potential considered in \cite{SST} as long as it is unstable in our sense). Suppose moreover that there exists a global minimiser $\rho \in
  \P(\R^d)$ of the energy
  \eqref{eq:energy}. Then $\rho$ is compactly supported.

\label{cor:complement}
\end{cor}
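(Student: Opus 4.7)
The strategy is to leverage the fact, already emphasised in the introduction, that the local mass estimate of Lemma \ref{lem:no-small-isolated-mass} does not use Hypothesis \ref{hyp:growth} --- only the gap estimate Lemma \ref{lem:no-large-holes} does. Since the corollary's assumptions provide a global minimiser $\rho$ of $E$ and the potential $W$ satisfies Hypotheses \ref{hyp:bfb}--\ref{hyp:lsc}, applying that lemma will yield constants $m, r > 0$, depending only on $W$ and $d$, such that $\rho(B(x, r)) \geq m$ for every $x \in \supp \rho$.

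From here I would obtain compactness by a simple packing argument. Suppose for contradiction that $\supp \rho$ is unbounded. Then I would extract inductively a sequence $(x_n)_{n \in \N} \subset \supp \rho$ with $|x_{n+1}| > |x_n| + 2r$; the triangle inequality guarantees that the balls $B(x_n, r)$ are pairwise disjoint. The local mass estimate applied at each $x_n$ then forces
\[
1 = \rho(\R^d) \geq \sum_{n \in \N} \rho(B(x_n, r)) \geq \sum_{n \in \N} m = +\infty,
\]
a contradiction. Hence $\supp \rho$ is bounded, and being closed by definition it is compact.

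I do not expect any serious obstacle: the nontrivial content is packaged in Lemma \ref{lem:no-small-isolated-mass}, and the only item to verify when writing up is that its proof --- most naturally stated for minimisers on $\overline{B}(0, R)$ within the main existence argument --- transfers verbatim to a global minimiser on $\R^d$. The heuristic derivation outlined in the introduction, based on the Euler--Lagrange identity $W * \rho = C$ on $\supp \rho$ together with the strict inequality $C < W_\infty$ forced by Hypothesis \ref{hyp:instability}, is manifestly insensitive to this distinction, so the adaptation should be a matter of bookkeeping rather than genuine new mathematics.
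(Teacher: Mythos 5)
Your proposal is correct and follows essentially the same route as the paper: the paper re-derives the local mass estimate of Lemma \ref{lem:no-small-isolated-mass} for the global minimiser $\rho$ (via Remark \ref{rem:constant-general} and the instability strict inequality $E(\rho)<\tfrac12 W_\infty$), then concludes by exactly the packing/contradiction argument you describe, using that $\lceil 1/m'\rceil+1$ disjoint balls each carrying mass at least $m'$ would force $\rho(\R^d)>1$. The only (cosmetic) difference is that the paper reaches its contradiction with finitely many points rather than an infinite sequence.
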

\begin{rem}
  We emphasise that Corollary \ref{cor:complement} does not require
  Hypothesis \ref{hyp:growth} on the growth at $\infty$ of the
  potential $W$. Hence global minimisers for potentials which, for
  instance, decrease to 0 at $\infty$ and satisfy Hypotheses
  \ref{hyp:bfb} to \ref{hyp:lsc}, have to be compactly supported. Note that our
  statement in this case does not show the existence of any uniform bound $K$ on the size of the support of a global minimiser (unlike Theorem \ref{thm:main}).
\label{rem:uniform}
\end{rem}
\medskip

\emph{Local} minimisers of the energy \eqref{eq:energy} in several
transport distances (i.e., minimisers in a small ball around them)
were studied in \cite{BCLR2}, where bounds on the dimension of their
support were given under some assumptions controlling the strength of
the repulsion at the origin. Moreover, Euler-Lagrange conditions for
local minimisers in several transport distances were also
obtained. These results (see \cite[Theorem 4]{BCLR2}) apply to our
case since a global minimiser is of course a local one in particular.
Note that, in Section \ref{subsec:corollary-local-minimisers} below, we derive a generalisation of Corollary \ref{cor:complement} to $d_2$-local minimisers, where $d_2$ is the quadratic Wasserstein distance. Let us mention that the rich structure of global and local minimisers of
the interaction energy was shown for several potentials and by
different numerical methods in
\cite{soccerball,FHK,FH,BCLR,BCLR2,BCY,Albietal,CHM,CCH2}.

Let us review some of the known rigorous results on the existence, uniqueness,
and other properties of these minimisers in some remarkable cases. An
often studied case is that in which the potential $W$ is a sum of
powers:
\begin{equation*}
  W(x) = \frac{|x|^a}{a} - \frac{|x|^b}{b},
  \qquad x \in \R^d,
\end{equation*}
for some $a, b \in \R$ with $-d < b < a$, and the understanding that
$|x|^0/0 \equiv \log |x|$. Here the term $|x|^a / a$ is the attractive
one (being an increasing function of $|x|$, regardless of the sign of
$a$), and $|x|^b/b$ is the repulsive one (since it is a decreasing
function of $|x|$). For $b = 2-d$ the repulsive term is called the
\emph{Newtonian potential}. It is not difficult to check that this class of potentials satisfies our
hypotheses (see Section \ref{sec:examples-nonexistence}).

The case $a=2$ simplifies the problem a lot since the attractive part
of the interaction can be reduced to an external quadratic confinement
by expanding the square. The case $b = 2-d$, $a = 2$ is actually
relatively well-known among probabilists: up to translations, the
unique global minimiser is the characteristic of a ball with an
appropriate radius. A closely related result with a compact
confinement is proved in \cite{frostman}, and the 2-dimensional case
can be found for example in \cite[Theorem 6.1,
p. 245]{Saff-Totik}. The extension to higher dimensions can be found
in \cite[Proposition 2.13]{LG2010}. The interest in this problem on
these references comes from its links to the capacity of sets and
applications to random matrix theory (see \cite{Chafai} and the
references therein).

A modified minimisation problem for power-law potentials with $a
  > -d$ was recently studied in \cite{CFT}, where the authors showed
  that there exists a minimiser $\rho_M$ in the class of radially
  symmetric functions with a fixed $L^\infty$-bound $M$. They also
  show that for $-d < a < 0$ the condition of radial symmetry is not
  needed, so that in that case a minimiser exists in the class of functions
  with a given $L^\infty$-bound.

The case $a = 2$, $b = 2s - d$ for $0 < s < 1$ was studied in
\cite{Caffarelli2011Asymptotic} in relation to the asymptotic
behaviour of eq. \eqref{eq:aggregation}, referred to as the
\emph{fractional porous medium equation}. The authors there showed
that there is a unique steady state (up to translations) to
\eqref{eq:aggregation}, which they called a \emph{modified Barenblatt
  profile}. Since the uniqueness was shown via the associated obstacle
problem and the Euler-Lagrange conditions in \cite[Theorem 4]{BCLR2}
show that global minimisers are regular solutions of the obstacle
problem (see also \cite{CDM}), then their uniqueness result implies
uniqueness of global minimisers. Finally, all cases with $-d<b\leq
2-d$ and $a>0$ were recently treated in \cite{CDM} showing the
regularity of local minimisers using the connection to classical
obstacle problems, by methods that can treat more general potentials
than power laws: results in \cite{CDM} apply to potentials behaving
like $-|x|^b/b$ at zero in the range $-d<b\leq 2-d$ with a smooth
enough attractive part of the potential.

On the other hand, there have been many works devoted to the study of
the steady states and long-time behaviour of
eq. \eqref{eq:aggregation} (see \cite{FellnerRaoul1,FellnerRaoul2,Raoul,FHK,FH,BCLR,BCLR2,CFP,CDFLS,BCY} and the references therein). Steady states for the case $b = 2-d$, $a > 0$ were studied in \cite{FHK,FH}, where it was
proved that there exists a unique radial compactly supported steady state (up
to translations). The asymptotic behaviour of
\eqref{eq:aggregation} in the case $b = 2-d$, $a = 2$ was studied
in \cite{FHK,BLL12}, and the case $-d < b < 2 - d$, $a = 2$, as
already remarked, was considered in
\cite{CV11,Caffarelli2011Asymptotic}. 

Finally, other particular interesting potentials are Morse-like
potentials \cite{DCBC06,BT2,BCY,CMP,CHM2} (treated in Section \ref{sec:examples-nonexistence}), for which there is a huge
numerical evidence of the existence of compactly supported global
minimisers. To our knowledge, a proof of this existence was not previously
available.
\medskip

The paper is organised as follows: in Section \ref{sec:existence} we
prove Theorem \ref{thm:main}. We split the proof of the existence part
into three main steps: first we show existence of global minimisers on
each set of probability measures supported on a given ball, second we
prove that the support diameters of such minimisers are uniformly
bounded, and third we show existence on the whole space $\P(\R^d)$. At the end of this section we give the generalisation of Corollary \ref{cor:complement} to local minimisers with respect to the quadratic Wasserstein topology. In
Section \ref{sec:examples-nonexistence} we give examples of potentials
satisfying the hypotheses of Theorem \ref{thm:main}, as well as
conditions for the non-existence of global minimisers which show that
Hypothesis \ref{hyp:instability} in Theorem \ref{thm:main} is almost sharp. 

\section{Existence of minimisers}
\label{sec:existence}

\subsection{Minimisers on a given ball}
\label{subsec:compactly-supported-each-set}

Let us define for all $R \geq 0$ the set $\P_R(\R^d) = \left\{ \rho \in \P(\R^d) \st \supp\rho \subset \overline{B}(0,R)\right\}$. This is the set of all probability measures with support included in
the closed ball $\overline{B}(0,R)$. For every $R \geq 0$, we want to
show existence of global minimisers on $\P_R(\R^d)$.

\medskip

Let us first show the following lemma:
\begin{lem}
  Let $W$ be a potential satisfying Hypotheses \ref{hyp:bfb} and
  \ref{hyp:lsc}. Then the energy \eqref{eq:energy} is weakly-$\star$
  lower semi-continuous, i.e., for any sequence $(\rho_n)_{n\in\N}$
  converging weakly-$\star$ to $\rho$ in $\P(\R^d)$ we have $E(\rho)
  \leq \liminf_{n\to\infty} E(\rho_n)$.
\label{lem:weak-lsc}
\end{lem}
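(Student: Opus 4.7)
The plan is a standard approximation-plus-monotone-convergence argument. First I would approximate $W$ from below by an increasing sequence of bounded continuous functions $W_k \nearrow W$, show that the corresponding approximating energies $E_k$ are actually \emph{continuous} along weak-$\star$ convergent sequences, and then take $k \to \infty$ via monotone convergence to recover lower semi-continuity of $E$ itself.

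For the approximation step I would use a truncated Moreau--Yosida regularisation: for each $k \in \N$, set
$$
W_k(x) := \min\left\{k,\ \inf_{y \in \R^d} \bigl(W(y) + k\abs{x-y}\bigr)\right\}, \qquad x \in \R^d.
$$
Each $W_k$ is $k$-Lipschitz, lies between $W_\mathrm{min}$ and $k$ by Hypothesis \ref{hyp:bfb}, and the sequence is increasing in $k$. Moreover, Hypothesis \ref{hyp:lsc} ensures $W_k(x) \nearrow W(x)$ pointwise on $\R^d$ (a short argument: if the supremum $L(x) < W(x)$ one finds approximate minimisers $y_k \to x$ with $W(y_k) \leq L(x)$, contradicting lower semi-continuity). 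The outer truncation at height $k$ is what handles the possibility that $W$ takes the value $+\infty$, and is the one mildly delicate part of the construction.

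Next I would invoke the well-known fact that weak-$\star$ convergence $\rho_n \to \rho$ in $\P(\R^d)$ implies weak-$\star$ convergence of the product measures $\rho_n \otimes \rho_n \to \rho \otimes \rho$ in $\P(\R^{2d})$ (tightness of $(\rho_n)$ is automatic since the narrow limit is a probability measure, and any cluster point agrees with $\rho \otimes \rho$ since the two share the same Fourier transform, or equivalently the same integrals against products $\phi(x)\psi(y)$). Since $(x,y) \mapsto W_k(x-y)$ is bounded and continuous, it follows that, for every $k$,
$$
E_k(\rho) := \tfrac12 \irdrd W_k(x-y) \d\rho(x) \d\rho(y) = \lim_{n \to \infty} E_k(\rho_n) \leq \liminf_{n\to\infty} E(\rho_n),
$$
where the inequality uses the pointwise bound $W_k \leq W$.

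Finally, monotone convergence applied to the nonnegative increasing sequence $W_k - W_\mathrm{min} \nearrow W - W_\mathrm{min}$ against the finite measure $\rho \otimes \rho$ gives $E_k(\rho) \to E(\rho)$. Passing $k \to \infty$ in the previous display yields $E(\rho) \leq \liminf_{n\to\infty} E(\rho_n)$, as desired. The only step carrying genuine content is the combination of the Yosida regularisation with monotone convergence; the convergence of the product measures and the continuity of $E_k$ are classical.
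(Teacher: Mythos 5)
Your proposal is correct and follows essentially the same approach as the paper: approximate $W$ from below by an increasing sequence of bounded continuous functions (you build one explicitly via truncated Moreau--Yosida regularisation, the paper cites a reference for its existence), use continuity of the approximate energies along the weakly-$\star$ convergent sequence, and conclude by monotone convergence. The only cosmetic difference is that you make explicit the convergence $\rho_n \otimes \rho_n \rightharpoonup \rho \otimes \rho$ of product measures, which the paper's phrase ``by definition of weak-$\star$ convergence'' leaves implicit.
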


\begin{proof}
  By Hypotheses \ref{hyp:bfb} and \ref{hyp:lsc} we know that there
  exists a non-decreasing sequence $(\varphi_m)_{m\in\N}$ of
  continuous and bounded functions such that $\varphi_m \to W$
  pointwise as $m \to \infty$ (see \cite[Lemma A.1.3]{Bauerle}). Let
  us consider a bound from below for $\varphi_1$, and denote it by
  $c$. Then $(\varphi_m - c)_{m\in\N}$ is non-negative, non-decreasing
  and with pointwise limit $W - c$. Suppose that $(\rho_n)_{n\in\N}$
  is a sequence weakly-$\star$ converging to $\rho$ in
  $\P(\R^d)$. Then, applying the Lebesgue monotone convergence
  theorem, we obtain
\begin{equation*}
  \irdrd (\varphi_m(x-y) - c) \d\rho(x) \d\rho(y) \xrightarrow[m\to\infty]{} \irdrd (W(x-y) - c) \d\rho(x) \d\rho(y).
\end{equation*}
Therefore, we infer
\begin{equation*}
  \irdrd \varphi_m(x-y) \d\rho(x) \d\rho(y) \xrightarrow[m\to\infty]{} \irdrd W(x-y) \d\rho(x) \d\rho(y).
\end{equation*}
Furthermore, since $(\varphi_m)_{m\in\N}$ is non-decreasing, we have
for all $n,m \in \N$,
\begin{equation*}
  \irdrd \varphi_m(x-y) \d\rho_n(x) \d\rho_n(y)
  \leq
  \irdrd W(x-y) \d\rho_n(x) \d\rho_n(y).
\end{equation*}
Hence, by definition of weak-$\star$ convergence and
passing to the limits $n,m \to \infty$, we get
\begin{equation*}
  \irdrd W(x-y) \d\rho(x) \d\rho(y)
  \leq \liminf_{n\to\infty} \irdrd W(x-y) \d\rho_n(x) \d\rho_n(y),
\end{equation*}
proving the desired result.
\end{proof}

We can now state an existence result for global minimisers of the
energy \eqref{eq:energy} on $\P_R(\R^d)$ for any $R \geq 0$.
\begin{lem}
  Suppose that the potential $W$ satisfies Hypotheses \ref{hyp:bfb}
  and \ref{hyp:lsc}. Then for every $R \geq 0$ there exists a global
  minimiser $\rho_R$ on $\P_R(\R^d)$ of the energy \eqref{eq:energy}.
\label{lem:existence-r}
\end{lem}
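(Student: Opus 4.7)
The plan is to apply the direct method in the calculus of variations, exploiting the compactness of $\overline{B}(0,R)$ together with the weak-$\star$ lower semi-continuity from Lemma \ref{lem:weak-lsc}.

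First, Hypothesis \ref{hyp:bfb} gives $E(\mu) \geq \tfrac12 W_{\mathrm{min}}$ for every $\mu \in \P(\R^d)$, so the infimum $I := \inf_{\mu \in \P_R(\R^d)} E(\mu)$ lies in $[\tfrac12 W_{\mathrm{min}}, +\infty]$. If $I = +\infty$, any element of $\P_R(\R^d)$ is trivially a minimiser and we are done, so I may assume $I$ is finite and pick a minimising sequence $(\rho_n)_{n \in \N} \subset \P_R(\R^d)$ with $E(\rho_n) \to I$.

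Second, since every $\rho_n$ is supported in the fixed compact ball $\overline{B}(0,R)$, the sequence is automatically tight, and Prokhorov's theorem produces a subsequence (still denoted $(\rho_n)$) converging weakly-$\star$ to some $\rho \in \P(\R^d)$. To see that $\rho$ still satisfies the support constraint, I apply the Portmanteau characterisation of weak-$\star$ convergence to the closed set $\overline{B}(0,R)$: since $\rho_n(\overline{B}(0,R)) = 1$ for every $n$, one obtains $\rho(\overline{B}(0,R)) \geq \limsup_n \rho_n(\overline{B}(0,R)) = 1$, which forces $\supp \rho \subset \overline{B}(0,R)$ and hence $\rho \in \P_R(\R^d)$.

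Third, Lemma \ref{lem:weak-lsc} yields $E(\rho) \leq \liminf_{n \to \infty} E(\rho_n) = I$, while $\rho \in \P_R(\R^d)$ gives the reverse inequality $E(\rho) \geq I$ by definition of $I$. Therefore $E(\rho) = I$ and $\rho$ is a global minimiser of $E$ on $\P_R(\R^d)$. There is no real obstacle in this argument; it is a textbook application of the direct method, with the only mildly subtle point being that weak-$\star$ convergence preserves the closed-support constraint, which is immediate from Portmanteau.
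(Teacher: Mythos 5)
Your proof is correct and follows essentially the same route as the paper: a minimising sequence is automatically tight because of the fixed support ball, Prokhorov yields a weakly-$\star$ convergent subsequence, the limit stays in $\P_R(\R^d)$ by closedness under weak-$\star$ convergence (you spell this out via Portmanteau, while the paper simply asserts weak-$\star$ closedness), and Lemma \ref{lem:weak-lsc} closes the argument. Your explicit handling of the $I = +\infty$ case is a minor, harmless addition not present in the paper.
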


\begin{proof}
  Let us fix $R \geq 0$ and let $(\rho_n)_{n\in\N}$ be a minimising
  sequence for the restriction of the energy \eqref{eq:energy} to
  $\P_R(\R^d)$. Note that $\P_R(\R^d)$ is a tight subset of
  $\P(\R^d)$, and therefore by Prohorov's theorem we know there
  exists a subsequence $(\rho_{n_k})_{k\in\N}$ and $\rho_R$ such that
  $\rho_{n_k} \rightharpoonup \rho_R$ weakly-$\star$ as $k \to
  \infty$. Moreover, since $\P_R(\R^d)$ is weakly-$\star$ closed, we have
  $\rho_R \in \P_R(\R^d)$. By Lemma \ref{lem:weak-lsc} we have
  $\liminf_{k \to \infty} E(\rho_{n_k}) \geq E(\rho_R)$. Hence,
  $\inf\{E(\rho) \st \rho \in \P_R(\R^d)\} = \lim_{n \to \infty}
  E(\rho_n) = \liminf_{k \to \infty} E(\rho_{n_k}) \geq E(\rho_R) \geq
  \inf\{E(\rho) \st \rho \in \P_R(\R^d)\}$. Therefore $\rho_R$ is a
  global minimiser of the energy \eqref{eq:energy} on $\P_R(\R^d)$.
\end{proof}

\subsection{Uniform bound on the support of minimisers}
\label{subsec:uniform-bound-minimisers}

We now show the existence of a bound for the diameter of the support
of global minimisers in $\P_R(\R^d)$ for any $R \geq 0$, which is
uniform in $R$.
\medskip

In \cite[Theorem 4(i)]{BCLR2} it was proved that a global minimiser
$\rho$ satisfies $W \ast \rho = 2E(\rho)$ $\rho$-almost everywhere. We
adapt it here for minimisers on $\P_R(\R^d)$ for any $R \geq 0$:
\begin{lem}
  Assume that the potential $W$ satisfies Hypotheses
  \ref{hyp:bfb}--\ref{hyp:sym} and \ref{hyp:lsc}. Take $R \geq 0$ and
  let $\rho_R$ be a global minimiser on $\P_R(\R^d)$. Then $W \ast
  \rho_R = 2E(\rho_R)$ $\rho_R$-almost everywhere.
\label{lem:constant}
\end{lem}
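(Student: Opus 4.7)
The plan is to use a first-variation, mass-transfer argument. Set $u := W * \rho_R$, which by Fatou's lemma (together with Hypotheses \ref{hyp:bfb} and \ref{hyp:lsc}) is a lower semi-continuous, hence Borel, function with values in $\R \cup \{+\infty\}$. By Fubini applied to the non-negative function $W - W_\mathrm{min}$, one has $\int u \, d\rho_R = 2 E(\rho_R)$, and $E(\rho_R)$ is finite since the uniform probability measure on any ball contained in $\overline{B}(0,R)$ has finite energy (by Hypothesis \ref{hyp:integrable}). Consequently $u$ is $\rho_R$-integrable, and showing that $u = 2E(\rho_R)$ $\rho_R$-a.e.\ reduces to showing that $u$ is $\rho_R$-a.e.\ constant.

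To prove this, I would argue by contradiction: assume there exist reals $s < t$ such that the Borel sets $A := \{u \leq s\}$ and $B := \{u \geq t\}$ both carry strictly positive $\rho_R$-mass. Define the probability measures $\mu := \rho_R|_A / \rho_R(A)$ and $\nu := \rho_R|_B / \rho_R(B)$, both supported in $\overline{B}(0,R)$, and for $m \in [0, \rho_R(B)]$ form the perturbation $\tilde\rho_m := \rho_R + m(\mu - \nu)$. Since $m \nu \leq \rho_R|_B \leq \rho_R$, the measure $\tilde\rho_m$ is non-negative, has total mass $1$, and is supported in $\overline{B}(0,R)$, so it belongs to $\P_R(\R^d)$.

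Using the symmetry of $W$ (Hypothesis \ref{hyp:sym}) and expanding, the variation of energy reads
\begin{equation*}
E(\tilde\rho_m) - E(\rho_R) = m \int u \, d(\mu - \nu) + \frac{m^2}{2} \irdrd W(x-y) \, d(\mu-\nu)(x) \, d(\mu-\nu)(y).
\end{equation*}
The linear coefficient is bounded above by $s - t < 0$ (using that $u$ is $\rho_R$-integrable to make sense of the two pieces), while the quadratic coefficient is finite, since $\mu$ and $\nu$ are dominated by constant multiples of $\rho_R$ and $\irdrd |W(x-y)| \, d\rho_R(x) \, d\rho_R(y) < \infty$ (by $E(\rho_R) < \infty$ together with Hypothesis \ref{hyp:bfb}). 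For all sufficiently small $m > 0$ one then gets $E(\tilde\rho_m) < E(\rho_R)$, contradicting the minimality of $\rho_R$ on $\P_R(\R^d)$. Therefore $u$ equals some constant $c$ $\rho_R$-a.e., and integrating this identity against $\rho_R$ yields $c = 2E(\rho_R)$.

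The main obstacle I anticipate is the careful bookkeeping of integrability in this expansion: the linear term must be given an unambiguous meaning (ruled out by $\rho_R$-integrability of $u$), and the quadratic term must be finite (ruled out by finiteness of $E(\rho_R)$ combined with $W \geq W_\mathrm{min}$). Once these are in hand, the variational step itself is routine, and the statement follows as a direct transcription of the classical ``constant potential on the support of a minimiser'' principle to the constrained setting $\P_R(\R^d)$.
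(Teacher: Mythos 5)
Your argument is correct, and it is a valid alternative to the paper's own proof, though both are first-variation arguments built on perturbations of $\rho_R$ within the constraint set $\P_R(\R^d)$. The paper perturbs via $\rho_\varepsilon = \rho_R + \varepsilon\nu$ with $\d\nu = (\varphi - \int\varphi\,\d\rho_R)\,\d\rho_R$ for an arbitrary $\varphi\in C_0^\infty(\R^d)$, obtains the Euler--Lagrange inequality $\int(W*\rho_R - 2E(\rho_R))\varphi\,\d\rho_R \ge 0$ for all such $\varphi$, and concludes by testing against $\pm\varphi$. You instead transfer a small mass $m$ from the super-level set $B=\{u\ge t\}$ to the sub-level set $A=\{u\le s\}$ and read off the negative sign of the linear term directly, arriving at $\rho_R$-a.e.\ constancy of $u=W*\rho_R$ and then identifying the constant via $\int u\,\d\rho_R = 2E(\rho_R)$. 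The upshot of your route is that you avoid the $C_0^\infty$ test-function machinery and the accompanying ``$\ge 0$ for all $\varphi$ implies $=0$'' step, replacing it with an elementary essential-sup/inf contradiction; the bookkeeping you flag (integrability of $u$ via Tonelli and $E(\rho_R)<\infty$, finiteness of the quadratic term since $\mu,\nu \le C\rho_R$) is exactly the same issue the paper addresses by invoking Hypotheses \ref{hyp:bfb} and \ref{hyp:integrable}. One minor technical remark: your claim that $E(\rho_R)<\infty$ via a uniform competitor implicitly needs $R>0$; for $R=0$ the only measure in $\P_0(\R^d)$ is $\delta_0$ and the statement is either vacuous or trivially true, so no harm, but it is worth noting that the paper does not single out this case either.
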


\begin{proof}
  Let $\varphi \in C_0^\infty(\R^d)$ and define
  \begin{equation*}
    \mathrm{d}\nu(x) = \left(\varphi(x) - \int_{\R^d} \varphi(y) \d\rho_R(y)\right) \mathrm{d}\rho_R(x)
  \end{equation*}
  for all $x \in \R^d$. Also consider $\rho_\varepsilon = \rho_R + \varepsilon\nu$ for
  $\varepsilon > 0$. Then we have
  \begin{equation*} 
    \int_{\R^d} \d\rho_\varepsilon(x) = \int_{\R^d} \d\rho_R(x) + \varepsilon\int_{\R^d} \d\nu(x) = 1.
  \end{equation*} 
  Moreover, one can check that
  \begin{equation*}
    \varphi(x) - \int_{\R^d} \varphi(y) \d\rho_R(y) \geq -2\norm{\varphi}_{\L{\infty}}.
  \end{equation*}
  Thus, for any Borel set $A$ of $\R^d$ we have
  \begin{equation*}
    \rho_\varepsilon(A) = \int_{A} \d\rho_\varepsilon(x) \geq \int_{A} \d\rho_R(x) - 2\varepsilon\norm{\varphi}_{\L{\infty}} \rho_R(A) = \left(1 - 2\varepsilon\norm{\varphi}_{\L{\infty}}\right)\rho_R(A).
  \end{equation*}
  Therefore, since $\rho_R$ is a probability measure, 
  \begin{equation*}
    \ba{ll}
    \rho_\varepsilon(A) \geq 0 & \mbox{if $\varepsilon \leq \dfrac{1}{2\norm{\varphi}_{\L{\infty}}}$}.
    \ea
  \end{equation*}
  Let us take such an $\varepsilon$, which ensures that
  $\rho_\varepsilon$ is a probability measure. Furthermore, we have
  $\supp \rho_\varepsilon \subset \overline{B}(0,R)$. Hence
  $\rho_\varepsilon \in \P_R(\R^d)$. We know that $\rho_R$ is a global
  minimiser on $\P_R(\R^d)$, and thus $E(\rho_\varepsilon) \geq
  E(\rho_R)$. In addition, by Hypotheses \ref{hyp:bfb} and
  \ref{hyp:integrable} the energy generated by $\rho_R$ is bounded. Then
  \begin{equation*}
    \dfrac{E(\rho_\varepsilon) - E(\rho_R)}{\varepsilon} = \int_{\R^d\times\R^d} W(x - y) \d\nu(x) \d\rho_R(y) + \dfrac{\varepsilon}{2}\int_{\R^d\times\R^d} W(x - y) \d\nu(x) \d\nu(y) \geq 0.
  \end{equation*}
  Hence, letting $\varepsilon \to 0$ and since the last integral is
  finite, we get
  \begin{equation*}
    \int_{\R^d\times\R^d} W(x - y) \d\nu(x) \d\rho_R(y) \geq 0,
  \end{equation*}
  or equivalently, by plugging the definition of $\nu$ inside the integral,
  \begin{equation*}
    \int_{\R^d} \left(W \ast \rho_R(x) - 2E(\rho_R)\right)\varphi(x) \d\rho_R(x) \geq 0.
  \end{equation*}
  This result being true for all $\varphi \in C_0^\infty(\R^d)$, we have
  that $W \ast \rho_R(x) - 2E(\rho_R) = 0$ $\rho_R$-almost everywhere.
\end{proof}

\begin{rem}
  By following the same argument one can see that Lemma
  \ref{lem:constant} is also true for a global minimiser $\rho$ on the
  whole space $\P(\R^d)$ (this is the content of \cite[Theorem 4(i)]{BCLR2}).
\label{rem:constant-general}
\end{rem}

To show that, under the instability condition in Hypothesis
\ref{hyp:instability}, the diameter of the support of a global
minimiser $\rho_R$ on $\P_R(\R^d)$ is independent of $R$, we first
notice that an unstable potential is also unstable on $\P_{{S}}(\R^d)$
for some finite radius ${S}$:

\begin{lem}
  \label{lem:instability-finite-R}
  Assume Hypotheses \ref{hyp:bfb} and \ref{hyp:instability} for the potential $W$. Then
  there exists ${S} > 0$ such that $W$ is unstable on $\P_{{S}}(\R^d)$
  (and hence on $\P_R(\R^d)$ for all $R \geq {S}$).
\end{lem}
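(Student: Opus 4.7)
\emph{Proof plan.} The strategy is to take any probability measure witnessing the instability of $W$ and truncate it to a ball, arguing that the energy of the renormalised truncation converges to the energy of the original measure; the strict inequality $E(\cdot) < \tfrac{1}{2} W_\infty$ is then inherited by the truncations for sufficiently large balls.

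More precisely, using Hypothesis \ref{hyp:instability} I would pick $\rho^* \in \P(\R^d)$ with $E(\rho^*) < \tfrac{1}{2} W_\infty$; note that by definition this forces $E(\rho^*) < +\infty$. For each $n \in \N$ large enough that $m_n := \rho^*(\overline{B}(0,n))$ is positive, I define the truncated and renormalised measure
\[
\rho_n := \frac{1}{m_n}\, \rho^*\rst_{\overline{B}(0,n)} \in \P_n(\R^d).
\]
The plan then reduces to proving $E(\rho_n) \to E(\rho^*)$ as $n \to \infty$: if so, then for any $n$ large enough we have $E(\rho_n) < \tfrac{1}{2} W_\infty$, and setting ${S} := n$ gives the instability of $W$ on $\P_{{S}}(\R^d)$ (and hence on $\P_R(\R^d)$ for every $R \geq {S}$, since $\P_{{S}}(\R^d) \subseteq \P_R(\R^d)$).

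For the convergence $E(\rho_n) \to E(\rho^*)$ I would split $W = (W - W_\mathrm{min}) + W_\mathrm{min}$, with $W - W_\mathrm{min} \geq 0$ by Hypothesis \ref{hyp:bfb}, and write
\[
2 m_n^2\, E(\rho_n) = \irdrd \mathbf{1}_{\overline{B}(0,n) \times \overline{B}(0,n)}(x,y)\, W(x-y) \d\rho^*(x) \d\rho^*(y).
\]
Monotone convergence applied to the nonnegative part $W - W_\mathrm{min}$ (whose total integral against $\rho^*\otimes\rho^*$ equals $2E(\rho^*) - W_\mathrm{min} < +\infty$), together with the elementary fact $m_n \to 1$, yields $E(\rho_n) \to E(\rho^*)$. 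The only delicate point is precisely this exchange of the truncation limit with the double integral, which the nonnegative-plus-constant splitting handles cleanly without any further structural assumption on $W$ beyond the boundedness from below.
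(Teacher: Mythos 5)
Your proof is correct and follows essentially the same route as the paper: truncate an unstable measure to balls of increasing radius, renormalise, and show convergence of the energies via monotone convergence applied to the nonnegative function $W - W_\mathrm{min}$, together with the convergence of the normalising masses to $1$. The paper proves the convergence $E(\rho_n) \to E(\rho)$ by exactly this decomposition (deferring the computation to the proof of Lemma~\ref{lem:non-compactly-supported-measures}), so there is no substantive difference in method.
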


\begin{proof}
  Let $\rho \in \P(\R^d)$ be such that $E(\rho) < \frac12 W_\infty$. Define the
  sequence of truncated probabilities $(\rho_n)_n \subset \P(\R^d)$,
  for every $n \in \N$ large enough so that $\rho(B(0,n)) > 0$, by
  $$\rho_n = \frac{1}{\rho(B(0,n))} \chi_{B(0,n)}\rho.$$ (Where
  $\chi_A$ denotes the characteristic function of a set $A$.) Clearly,
  for every such $n$, we have $\rho_n \in \P_n(\R^d)$. It is easy to
  see that $E(\rho_n) \to E(\rho)$ as $n \to \infty$, and hence there
  exists $N \in \N$ large enough such that $E(\rho_N) < \frac12 W_\infty$ (see
  the proof of Lemma \ref{lem:non-compactly-supported-measures} for a
  similar calculation). This proves the lemma with ${S} = N$.
\end{proof}

Below we always consider ${S} > 0$ to be a radius obtained from Lemma
\ref{lem:instability-finite-R}; that is, a number such that $W$ is
unstable on $\P_{{S}}(\R^d)$.

The following two lemmas are fundamental in the proof of our main
  result. The first one shows that if, for some $R$, a point is in the support of a
  minimiser $\rho_R$ on $\P_R(\R^d)$, then there has to be at least some mass not far
  from it. The quantification of ``some'' and ``not far'' are
  independent of $R$ and the point one chooses, so that this is a
  uniform estimate for all minimisers and all points:

\begin{lem}
  Suppose that the potential $W$ satisfies Hypotheses
  \ref{hyp:bfb}--\ref{hyp:lsc}. Then there are constants $r, m > 0$
  (depending only on $W$) such that for all $R \geq {S}$ and all
  global minimisers $\rho_R$ of the energy \eqref{eq:energy} on
  $\P_R(\R^d)$ we have
\begin{equation*}
  \int_{B(x_0,r)} \d \rho_R(x) \geq m
  \quad
  \text{for all $x_0 \in \supp \rho_R$.}
\end{equation*}

\label{lem:no-small-isolated-mass} 
\end{lem}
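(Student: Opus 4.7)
The plan is to combine three ingredients: the Euler--Lagrange identity from Lemma \ref{lem:constant}, a uniform energy bound coming from Hypothesis \ref{hyp:instability}, and the limit behaviour of $W$ at infinity encoded in $W_\infty$. The underlying intuition is that if almost all of the mass of $\rho_R$ lay far from $x_0$, the quantity $W*\rho_R(x_0)$ would be essentially $W_\infty$, contradicting the fact that at support points this quantity is bounded above by $2E(\rho_R)$, which is strictly less than $W_\infty$ by instability.

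First I would set up the uniform energy bound. Applying Lemma \ref{lem:instability-finite-R} together with Lemma \ref{lem:existence-r}, I obtain a minimiser $\rho_{{S}}^*$ on $\P_{{S}}(\R^d)$ satisfying $C := 2E(\rho_{{S}}^*) < W_\infty$. Since $\P_{{S}}(\R^d) \subset \P_R(\R^d)$ whenever $R \geq {S}$, this gives $2E(\rho_R) \leq C$ for every such $R$. I would then fix any $C' \in (C, W_\infty)$ (a finite real, even if $W_\infty = +\infty$); the definition of $W_\infty$ supplies an $r > 0$ such that $W(z) \geq C'$ whenever $|z| \geq r$.

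Next I would upgrade the Euler--Lagrange identity to a pointwise bound valid at every point of $\supp\rho_R$. Lemma \ref{lem:constant} yields $W*\rho_R = 2E(\rho_R) \leq C$ at $\rho_R$-almost every point. For $x_0 \in \supp\rho_R$, each ball $B(x_0, 1/n)$ has positive $\rho_R$-measure and hence contains some $x_n$ with $W*\rho_R(x_n) = 2E(\rho_R)$. Because $W$ is lower semi-continuous and bounded below, Fatou's lemma shows that $W*\rho_R$ is itself lower semi-continuous, so
\begin{equation*}
W*\rho_R(x_0) \leq \liminf_n W*\rho_R(x_n) = 2E(\rho_R) \leq C.
\end{equation*}

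A direct estimate then completes the argument: splitting $\R^d$ into $B(x_0, r)$ and its complement, and using $W \geq W_{\mathrm{min}}$ on the first piece and $W \geq C'$ on the second,
\begin{equation*}
C \geq W*\rho_R(x_0) \geq W_{\mathrm{min}}\,\rho_R(B(x_0,r)) + C'\bigl(1 - \rho_R(B(x_0,r))\bigr),
\end{equation*}
which rearranges to
\begin{equation*}
\rho_R(B(x_0,r)) \geq \frac{C' - C}{C' - W_{\mathrm{min}}} =: m > 0,
\end{equation*}
and both $r$ and $m$ depend only on $W$. I expect the main obstacle to be the passage from the $\rho_R$-almost everywhere identity to the pointwise inequality at every point of $\supp\rho_R$: this is where Hypothesis \ref{hyp:lsc} is essential, and it is the step requiring the most care.
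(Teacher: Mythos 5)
Your proposal is correct, and the core estimate (the decomposition of $W*\rho_R(x_0)$ into contributions inside and outside $B(x_0,r)$, the uniform energy bound via $\P_S(\R^d) \subset \P_R(\R^d)$, and the resulting lower bound $m = (C'-C)/(C'-W_{\min})$) coincides with the paper's Step~1. Where you diverge is in upgrading the $\rho_R$-almost-everywhere identity from Lemma~\ref{lem:constant} to a bound valid at \emph{every} point of $\supp\rho_R$. The paper avoids any continuity argument: it enlarges the radius to $r = r' + \delta$, and then, given $x_0 \in \supp\rho_R$, either $\rho_R(\{x_0\}) > 0$ so the a.e.\ bound holds at $x_0$ directly, or else there is a nearby good point $y_0 \in B(x_0,\delta)$ and the containment $B(y_0,r') \subset B(x_0,r)$ transfers the estimate. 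You instead observe that, since $W$ is lower semi-continuous and bounded below, Fatou's lemma makes $W*\rho_R$ itself lower semi-continuous, so $W*\rho_R(x_0) \leq \liminf_n W*\rho_R(x_n) = 2E(\rho_R)$ along a sequence of good points $x_n \to x_0$. This is a genuinely different (and slightly tidier) mechanism: it buys you the pointwise bound at the \emph{same} radius $r$ rather than an enlarged one, and it records the useful fact that $W*\rho_R \leq 2E(\rho_R)$ holds everywhere on the support, which is a cleaner statement than what the paper's ball-containment trick establishes. The trade-off is that you lean on Hypothesis~\ref{hyp:lsc} one more time, whereas the paper's Step~2 uses only the definition of support; both are legitimate since l.s.c.\ is assumed anyway.
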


\begin{proof}
  We proceed in two steps: we first prove the result $\rho_R$-almost
  everywhere, and then everywhere in $\supp \rho_R$.
  \medskip
  
  \underline{Step 1: $\rho_R$-almost everywhere.} Call $E_R$ the
  minimum of the energy on $\P_R(\R^d)$; that is, $E_R := \min \left\{
    E(\rho) \mid \rho \in \P_R(\R^d) \right\}$. (We know this minimum
  exists due to Lemma \ref{lem:existence-r}.)  Clearly $E_R$ is
  non-increasing in $R$ and $E_R \leq E_{{S}} < \frac12
    W_\infty$ for all $R \geq {S}$ by our choice of ${S}$ (see Lemma
  \ref{lem:instability-finite-R}). If we consider a global minimiser
  $\rho_R$ on $\P_R(\R^d)$, we know by Lemma \ref{lem:constant} that
  for $\rho_R$-almost all $z \in \supp \rho_R$ we have
  \begin{equation*}
    \frac 12 \ird W(z - x) \d \rho_R(x)
    = E(\rho_R) = E_R \leq E_{{S}} < \frac12 W_\infty.
  \end{equation*}
  Note that $E_{{S}}$ is independent of $R$ and of the choice of the
  global minimiser $\rho_R$. Choose $A \in \R$ with $E_{{S}} < A <
  \frac12 W_\infty$. Since by definition we have $\lim_{|x| \to
    \infty} W(x) = W_\infty$, we can choose $r' > 0$ with $W(x)
  \geq 2A$ for all $x \in \R^d$ such that $|x| \geq r'$. (Notice that
  both $A$ and $r'$ are independent of $R$.) Then for $\rho_R$-almost
  every $z$ we have
  \begin{align*}
    2 E_R &= \ird W(z - x) \d \rho_R(x) = \int_{B(z,r')} W(z - x) \d \rho_R(x)
    + \int_{\R^d \setminus B(z,r')} W(z - x) \d \rho_R(x)
    \\
    &\geq
    W_\mathrm{min} \int_{B(z,r')} \d \rho_R(x)
    + 2 A \int_{\R^d \setminus B(z,r')} \d \rho_R(x)
    =
    (W_\mathrm{min} - 2A) \int_{B(z,r')} \d \rho_R(x)
    + 2 A,
  \end{align*}
  where we have used that $\rho_R$ is a probability
  measure. Rearranging terms and noticing that $W_\mathrm{min} - 2A <
  W_\mathrm{min} - 2E_{{S}} < 0$ and $2E_{{S}} \geq 2E_R$,
  \begin{equation}
    \label{eq:no-small-isolated-mass}
    \int_{B(z,r')} \d \rho_R(x)
    \geq \frac{A - E_{{S}}}{A - \frac 12 W_{\mathrm{min}}}
    =: m.
  \end{equation}
  This finishes this step, since the right-hand side depends only on $W$.
  
  \medskip
  
  \underline{Step 2: everywhere.} Take $\delta > 0$, call $r := r' +
  \delta$ and let $x_0 \in \supp \rho_R$. Then we get
  $\rho_R(B(x_0,\delta)) > 0$ by definition of the support of
  $\rho_R$. Suppose first that $\rho_R(B(x_0,\delta) \setminus
  \{x_0\}) = 0$. Then $\rho_R(\{x_0\}) > 0$ and therefore
  \eqref{eq:no-small-isolated-mass} has to be satisfied at $x_0$, so
  \begin{equation*}
    \int_{B(x_0,r)} \d \rho_R(x) \geq \int_{B(x_0,r')} \d \rho_R(x) \geq m.
  \end{equation*}
  Suppose now that $\rho_R(B(x_0,\delta) \setminus \{x_0\}) > 0$. Then
  there is $y_0 \in B(x_0,\delta) \setminus \{x_0\}$ such that
  \eqref{eq:no-small-isolated-mass} has to be satisfied at
  $y_0$. Thus, since $B(y_0,r') \subset B(x_0,r)$,
  \begin{equation*}
    \int_{B(x_0,r)} \d \rho_R(x) \geq \int_{B(y_0,r')} \d \rho_R(x) \geq m.
  \end{equation*}
  Hence the result holds for all $x_0 \in \supp \rho_R$ as $r$ depends
  only on $W$.
\end{proof}

The following lemma is in some sense complementary to the
  previous one: we show that if a minimiser $\rho_R$ on $\P_R(\R^d)$ has a ``gap'' in
  one of the coordinates, then it cannot be very big, with the
  quantification of ``very big'' being again independent of $R$ and
  the position of the gap. It is interesting to note that this is the
  only part in the paper where Hypothesis \ref{hyp:growth} is
  explicitly used: all later dependence on this hypothesis is through
  this lemma.
  
  In order to state this precisely let us denote by $\pi_k\:\R^d \to
  \R$ the $k$-th coordinate projection, for $k \in
  \{1,\dots,d\}$. Then the following holds:

\begin{lem}
  Assume that the potential $W$ satisfies Hypotheses \ref{hyp:bfb},
  \ref{hyp:sym} and \ref{hyp:growth}. Let $R \geq 0$ and suppose that
  $\rho_R$ is a global minimiser of the energy \eqref{eq:energy} on
  $\P_R(\R^d)$. Then the support of $\rho_R$ cannot have ``gaps''
  larger than $2 R_6$ in each coordinate (where $R_6$ is
    the constant from Hypothesis \ref{hyp:growth}): if $k \in
    \{1,\dots,d\}$ and $a_k \in \R$ is such that
  $\pi_k^{-1}([a_k-R_6,a_k+R_6]) \subseteq \R^d
  \setminus \supp \rho_R$, then either
  $\pi_k^{-1}((-\infty,a_k-R_6]) \subseteq \R^d \setminus
  \supp \rho_R$ or $\pi_k^{-1}([a_k+R_6,\infty)) \subseteq
  \R^d \setminus \supp \rho_R$.

\label{lem:no-large-holes}
\end{lem}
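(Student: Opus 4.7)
The plan is to argue by contradiction: suppose that both $\rho_- := \chi_{\{x_k < a_k - R_6\}} \rho_R$ and $\rho_+ := \chi_{\{x_k > a_k + R_6\}} \rho_R$ are nontrivial (the gap hypothesis gives $\rho_R = \rho_- + \rho_+$) and construct a measure in $\P_R(\R^d)$ with strictly smaller energy by sliding one of the two halves into the gap. Concretely, I would take $\mu_s := \rho_- + (T_{s e_k})_* \rho_+$ for a small $s > 0$, where $T_{se_k}\:x \mapsto x - se_k$ is the translation notation from the introduction, so that the upper half gets pushed a distance $s$ towards the gap.

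The first task is to verify $\mu_s \in \P_R(\R^d)$, and this is the one delicate point of the construction. Without loss of generality I would assume $a_k \geq 0$; the case $a_k \leq 0$ is obtained by shifting $\rho_-$ to the right instead of $\rho_+$ to the left. Under this assumption every $y \in \supp \rho_+$ satisfies $y_k > a_k + R_6 \geq R_6 > 0$, so
\[
|y - s e_k|^2 = |y|^2 - 2 s y_k + s^2 \leq R^2
\]
whenever $s \in (0, 2 R_6]$, which together with $\supp \rho_- \subseteq \overline{B}(0,R)$ gives $\supp \mu_s \subseteq \overline{B}(0, R)$.

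Next I would compare energies. The self-energy of $\rho_-$ is unchanged and that of $(T_{se_k})_*\rho_+$ equals that of $\rho_+$ by translation invariance, so the only surviving contribution to $E(\mu_s) - E(\rho_R)$ is the cross term
\[
\iint \bigl[W(x - y + s e_k) - W(x - y)\bigr] \, \d\rho_-(x) \, \d\rho_+(y).
\]
Rewriting the integrand as $W(y - x - s e_k) - W(y - x)$ via Hypothesis \ref{hyp:sym} and observing that on $\supp \rho_- \times \supp \rho_+$ one has $(y - x)_k > 2 R_6$, both $(y - x)_k$ and $(y - x)_k - s$ lie in $[R_6, \infty)$ as soon as $s \leq R_6$. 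Hypothesis \ref{hyp:growth} then delivers $W(y - x - s e_k) < W(y - x)$ pointwise on the product support. Since $\rho_- \otimes \rho_+$ is a nonzero finite measure and each of the two integrands is integrable (Hypothesis \ref{hyp:bfb} together with finiteness of $E(\rho_R)$ and $E(\mu_s)$ on these compactly supported measures), the strict pointwise inequality upgrades to $E(\mu_s) < E(\rho_R)$, contradicting minimality.

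From the dichotomy $\rho_- = 0$ or $\rho_+ = 0$ the conclusion is immediate: since $\{x_k < a_k - R_6\}$ is open, $\rho_- = 0$ implies $\supp \rho_R \cap \{x_k < a_k - R_6\} = \emptyset$, which together with the gap hypothesis is exactly $\pi_k^{-1}((-\infty, a_k - R_6]) \subseteq \R^d \setminus \supp \rho_R$. The hard part will be organising the ball constraint cleanly: a naive symmetric slide of both halves towards each other can push mass outside $\overline{B}(0,R)$ when the gap is not centred at the origin, which is why the argument proceeds by shifting only the half sitting on the side of $\{x_k = a_k\}$ dictated by the sign of $a_k$.
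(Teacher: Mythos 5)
Your proposal follows the same core strategy as the paper's proof: assume both halves carry mass, slide one half a small distance into the gap, and use translation invariance together with Hypotheses~\ref{hyp:sym} and~\ref{hyp:growth} to show the cross term strictly decreases, contradicting minimality. Your energy computation (expressing the cross term as $\iint [W(x-y+se_k)-W(x-y)]\,\d\rho_-\d\rho_+$, rewriting via symmetry, and invoking the monotonicity on $[R_6,\infty)$ in the $k$-th variable) is exactly the paper's calculation, and the reduction to the dichotomy $\rho_-=0$ or $\rho_+=0$ at the end is handled correctly.

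Where you genuinely depart from the paper is in the ball constraint, and you are right to flag it as the delicate point. The paper always translates the ``right'' half $\rho_R\rst_{H_\mathrm{R}}$ to the left and asserts ``Clearly $\tilde\rho_R\in\P_R(\R^d)$,'' but this is false in general: if $a_k$ is sufficiently negative, a point $y\in\supp\rho_R\cap H_\mathrm{R}$ with $y_k<0$ and $|y|$ near $R$ satisfies $|y-\epsilon|^2=|y|^2-2\epsilon_k y_k+\epsilon_k^2>|y|^2$, so the shifted mass can leave $\overline{B}(0,R)$. Your fix --- choosing which half to slide according to the sign of $a_k$ so that the translated points move toward the origin, together with the elementary estimate $|y-se_k|^2=|y|^2-2sy_k+s^2\le R^2$ valid when $y_k\ge R_6$ and $0<s\le 2R_6$ --- closes this gap cleanly. (One could alternatively repair the paper's argument by first reducing to $a_k\ge 0$ via the reflection $\rho\mapsto\rho(-\cdot)$, which preserves $E$ by Hypothesis~\ref{hyp:sym} and maps minimisers to minimisers; but the paper does not say this, and your version avoids the detour.) In short: same proof skeleton, but your treatment of $\P_R(\R^d)$-admissibility is more careful and in fact necessary.
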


\begin{proof}
  If this is not the case, take $k \in \{1,\dots,d\}$ and ${a_k
    \in \R}$ with ${\pi_k^{-1}([a_k-R_6,a_k+R_6])} \subseteq \R^d
  \setminus \supp \rho_R$, and such that the support of $\rho_R$
  intersects both the ``left'' part $${H_\mathrm{L} :=
    \pi_k^{-1}((-\infty,a_k-R_6])}$$ and the ``right'' part
  $$
{H_\mathrm{R} := \pi_k^{-1}([a_k+R_6,\infty))}.
$$ 
Take $0 <  \epsilon_k \leq R_6$ and $\epsilon = (0,\dots,0,\epsilon_k,0,\dots,0)
  \in \R^d$ with $k$-th coordinate $\epsilon_k$, and consider
  \begin{equation*}
    \tilde{\rho}_R
    := \rho_R \rst_{H_\mathrm{L}}
    + T_\epsilon \big(\rho_R \rst_{H_\mathrm{R}}\big),
  \end{equation*}
  where $\mu \rst_{A}$ denotes the restriction of a measure $\mu$ to a
  set $A$, $T_\epsilon(\mu)$ the push-forward of a measure $\mu$ by the $\epsilon$-translation $
  T_\epsilon\: x \mapsto x - \epsilon$, and as usual $\chi_A$ denotes the characteristic
  function of a set $A$. Clearly $\tilde{\rho}_R \in \P_R(\R^d)$ and
  it is the result of slightly moving to the ``left" the part of
  $\rho_R$ in the $k$-coordinate which is to the ``right" of
  $a_k+R_6$. By Hypotheses \ref{hyp:sym} and \ref{hyp:growth},
  $\tilde{\rho}_R$ has lower energy than $\rho_R$:
  \begin{align*}
    E(\tilde{\rho}_R)
    &=
    E\big( \rho_R \rst_{H_\mathrm{L}} \big) +
    E\big( T_\epsilon \big(\rho_R \rst_{H_\mathrm{R}}\big) \big)
    + \ird \ird W(x-y) \d \rho_R \rst_{H_\mathrm{L}} (x)
    \d T_\epsilon \big(\rho_R \rst_{H_\mathrm{R}}\big)(y)
    \\
    &=
    E\big( \rho_R \rst_{H_\mathrm{L}} \big) +
    E\big( \rho_R \rst_{H_\mathrm{R}} \big)
    + \ird \ird W(x-y-\epsilon)
    \d \rho_R \rst_{H_\mathrm{L}} (x)
    \d \rho_R \rst_{H_\mathrm{R}}(y)
    \\
    &<
    E\big( \rho_R \rst_{H_\mathrm{L}} \big) +
    E\big( \rho_R \rst_{H_\mathrm{R}} \big)
    + \ird \ird W(x-y)
    \d \rho_R \rst_{H_\mathrm{L}} (x)
    \d \rho_R \rst_{H_\mathrm{R}}(y)
    \\
    &= E(\rho_R).
  \end{align*}
  Notice that we use here the translation invariance of the energy
  $E$ and the fact that $$\rho_R = \rho_R \rst_{H_\mathrm{L}} + \rho_R
  \rst_{H_\mathrm{R}}.$$ The strict inequality is due to $W$ being
  strictly increasing in the $k$-th coordinate on $\R^{k-1} \times
  [2R_6-\epsilon_k,\infty) \times \R^{d-k}$ and to our assumption that
  the support of $\rho_R$ intersects both $H_{\mathrm{L}}$ and
  $H_{\mathrm{R}}$. This contradicts the fact that $\rho_R$ is a
  global minimiser on $\P_R(\R^d)$.
\end{proof}

\begin{rem}
	Notice that the proof of Lemma \ref{lem:no-large-holes} still works if instead of Hypothesis \ref{hyp:growth} we suppose the following, less restrictive, growth assumption: for every $k \in \{1,\dots,d\}$ there exists $0 < \epsilon_k \leq R_6$ such that if $x \in \R^d$ is with $x_k \geq R_6$, then $W(x + \epsilon) > W(x)$, where $\epsilon = (0,\dots,0,\epsilon_k,0,\dots,0) \in \R^d$ with $k$-th coordinate $\epsilon_k$. This implies that all our results using Hypothesis \ref{hyp:growth} stay true by relaxing it to this growth assumption (in particular Theorem \ref{thm:main}).
\label{rem:growth-relaxation}
\end{rem}

Next we are able to give a uniform bound on the diameter of a
  minimiser $\rho_R$ on $\P_R(\R^d)$ which is independent of $R$. This already
  contains the main part of the proof of existence of a global
  minimiser, since it could be used, for example, to show the
  tightness of a minimising sequence. It is the main ingredient in
  Lemma \ref{lem:non-compactly-supported-measures}, which is the
  existence part of Theorem \ref{thm:main}.

\begin{lem}
  Assume that the potential $W$ satisfies Hypotheses
  \ref{hyp:bfb}--\ref{hyp:growth}. There exists $K > 0$ (depending
  only on $W$ and $d$) such that for all $R \geq 0$ and global
  minimiser $\rho_R$ of the energy \eqref{eq:energy} on $\P_R(\R^d)$,
  the diameter of the support of $\rho_R$ is bounded by $K$.
  \label{lem:compact-support}
\end{lem}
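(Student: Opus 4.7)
The plan is to combine the two key a priori estimates, Lemmas \ref{lem:no-small-isolated-mass} and \ref{lem:no-large-holes}, to control both the number of ``pieces'' of the support and the distance between them, and then to conclude via a coordinate-wise projection argument.

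First I would dispose of the easy range $R < {S}$: here $\supp \rho_R \subseteq \overline{B}(0,R)$ gives diameter at most $2{S}$, so I may assume $R \geq {S}$ and invoke Lemma \ref{lem:no-small-isolated-mass}, which supplies constants $r, m > 0$ depending only on $W$ such that every point of $\supp \rho_R$ carries at least mass $m$ in its $r$-neighbourhood. Taking a maximal set $\{x_1,\dots,x_N\} \subseteq \supp \rho_R$ with pairwise distance strictly greater than $2r$, the balls $B(x_i,r)$ are disjoint and each has $\rho_R$-mass at least $m$, so $N \leq \lceil 1/m \rceil$. By maximality, every $y \in \supp \rho_R$ lies within distance $2r$ of some $x_i$, hence
\[
\supp \rho_R \subseteq \bigcup_{i=1}^{N} \overline{B}(x_i, 2r).
\]

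The second step is to control the diameter one coordinate at a time. Fix $k \in \{1,\dots,d\}$, set $y_i := \pi_k(x_i)$, and note that $\pi_k(\supp \rho_R) \subseteq \bigcup_{i=1}^{N} [y_i - 2r, y_i + 2r]$. After ordering the $y_i$ and merging overlapping intervals, this union becomes a disjoint union $J_1 \cup \dots \cup J_M$ of closed intervals in $\R$ with $M \leq N$, each $J_j$ intersecting $\pi_k(\supp \rho_R)$, and with combined length at most $4rN$. Now I apply Lemma \ref{lem:no-large-holes}: if any complementary gap $(b_j, c_{j+1})$ between $J_j$ and $J_{j+1}$ had length at least $2R_6$, its midpoint $a_k$ would satisfy $\pi_k^{-1}([a_k - R_6, a_k + R_6]) \subseteq \R^d \setminus \supp \rho_R$, forcing the support to lie entirely on one side of $a_k$ in the $k$-th coordinate; but this contradicts the fact that $\pi_k(\supp \rho_R)$ meets both $J_j$ and $J_{j+1}$. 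Hence every gap has length strictly less than $2R_6$, so the total length of gaps is at most $2R_6(M-1) \leq 2R_6 \lceil 1/m \rceil$.

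Putting these together, the diameter of $\pi_k(\supp \rho_R)$ is bounded by the combined length of the $J_j$'s plus the gaps, namely
\[
\operatorname{diam} \pi_k(\supp \rho_R) \leq 4r N + 2R_6 (N-1) \leq (4r + 2R_6)\lceil 1/m \rceil,
\]
which is independent of $R$, of $k$, and of the choice of minimiser. Since this holds for every coordinate, $\operatorname{diam} \supp \rho_R \leq \sqrt{d}\,(4r + 2R_6)\lceil 1/m \rceil$, and taking
\[
K := \max\!\left(2{S},\ \sqrt{d}\,(4r + 2R_6)\lceil 1/m \rceil\right)
\]
yields the desired uniform bound. The only mildly delicate point is verifying that the ``gaps'' appearing after projection and merging really satisfy the hypothesis of Lemma \ref{lem:no-large-holes}; this reduces to the tautological observation that if $\pi_k(\supp \rho_R) \subseteq \bigcup J_j$, then $\pi_k^{-1}$ of any gap in the $J_j$ decomposition is automatically disjoint from $\supp \rho_R$, so no further care is needed.
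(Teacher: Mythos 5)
Your proof is correct and takes essentially the same route as the paper: cover $\supp\rho_R$ by at most $\lceil 1/m\rceil$ balls of radius $2r$ using Lemma~\ref{lem:no-small-isolated-mass}, then bound the coordinate-wise gaps between the pieces via Lemma~\ref{lem:no-large-holes}, and finish with the $\sqrt{d}$ factor. The paper constructs the covering centres by a greedy recursion rather than your maximal $2r$-separated net, and bounds consecutive differences of sorted coordinates instead of explicitly merging projected intervals, but these are only presentational variants; the one imprecision in your write-up is that a gap could have length exactly $2R_6$ (so ``$\leq 2R_6$'' rather than ``strictly less''), which does not affect the final estimate.
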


\begin{proof}
  Let ${S}$ be a radius given by Lemma \ref{lem:instability-finite-R}
  (i.e., such that $W$ is unstable on $\P_{R}(\R^d)$ for all $R \geq
  {S}$). Since ${S}$ depends only on $W$, it is clearly enough to show
  the lemma for $R \geq {S}$.

  Take any $x_0 \in \supp \rho_R$. We recursively define $N+1$ points
  $\{x_0,\dots, x_N\}$, for some $N \geq 0$, as follows:
  \begin{enumerate}
  \item If $\supp \rho_R \setminus \bigcup_{i=0}^{n-1} B(x_i,2r)$,
    where $r$ is the constant in Lemma
    \ref{lem:no-small-isolated-mass} and $n$ is the number
      of already selected points, is not empty, then take any
    $x_n$ in that set.
  \item If the above set is empty, then $x_{n-1}$ is the last term of
    the sequence (i.e., $N = n-1$).
  \end{enumerate}
  We notice that this process must end after at most $\lceil 1/m
  \rceil$ steps; this is, $N + 1 \leq \lceil 1/m \rceil$, where $m$ is
  the constant in Lemma \ref{lem:no-small-isolated-mass} and $\lceil
  \cdot \rceil$ is the ceiling function. The reason for this is that,
  for each $i \in \{0 \dots,N\}$, the ball $B(x_i,r)$ contains at
  least a fixed amount $m$ of mass (see Lemma
  \ref{lem:no-small-isolated-mass}), and this mass is not in any of
  the other balls. Also, it is clear that the support of $\rho_R$ is
  contained in $\bigcup_{i=0}^{N} B(x_i,2r)$.

  We write $x_i = (x_i^{(1)},\dots,x_i^{(d)})$ for all $i \in \{0
  \dots,N\}$, and for any $k \in \{1,\dots,d\}$ we relabel the points
  so that $x_0^{(k)} < \dots < x_N^{(k)}$. Then, if $N > 0$, we have
  \begin{equation*}
    x_{i+1}^{(k)} - x_i^{(k)} \leq 4r + 2R_6
  \end{equation*}
  for all $i \in \{0 \dots,N-1\}$, due to Lemma
  \ref{lem:no-large-holes} (otherwise the support of $\rho_R$ would
  have a gap larger than $2R_6$ in the $k$-th coordinate). From
  this we deduce that
  \begin{equation*}
    x_N^{(k)} - x_0^{(k)} \leq N (4r + 2R_6)
    \leq (\lceil 1/m \rceil - 1) (4r + 2R_6).
  \end{equation*}
  Note that this inequality still holds if $N = 0$, as in this case
  $m$ must be 1. Since $k$ is arbitrary, we have that the diameter of
  the support of $\rho_R$ in each coordinate $k$ is bounded by $4r +
  (\lceil 1/m \rceil - 1) (4r + 2R_6)$. Therefore the diameter of the
  support of $\rho_R$ with respect to the 2-Euclidean norm satisfies
  \begin{equation*}
    \diam \left(\supp \rho_R\right)
    \leq \sqrt{d} (4r + (\lceil 1/m \rceil - 1) (4r + 2R_6)) =: K.
  \end{equation*}
  Note that $K$ does not depend on $R$ or on the choice of $\rho_R$.
\end{proof}

\subsection{Minimisers on the whole set of probability measures}
\label{subsec:all}

We now finish the proof of the existence part of Theorem
\ref{thm:main}:
\begin{lem}
  Assume that the potential $W$ satisfies Hypotheses
  \ref{hyp:bfb}--\ref{hyp:growth}. Then there exists a global
  minimiser for the energy \eqref{eq:energy} on $\P(\R^d)$.
  \label{lem:non-compactly-supported-measures}
\end{lem}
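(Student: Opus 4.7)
The plan is to combine the two preceding ingredients: existence on balls (Lemma \ref{lem:existence-r}) and the uniform diameter bound (Lemma \ref{lem:compact-support}). For each integer $n \geq 1$ let $\rho_n$ be a minimiser on $\P_n(\R^d)$. By Lemma \ref{lem:compact-support}, $\diam(\supp \rho_n) \leq K$ for a constant $K$ independent of $n$. Using the translation invariance of $E$, I can replace each $\rho_n$ by a translate $\tilde\rho_n = T_{z_n}(\rho_n)$ whose support lies in $\overline{B}(0,K)$; this preserves $E(\tilde\rho_n) = E(\rho_n) = E_n := \min_{\P_n(\R^d)} E$.

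The sequence $(\tilde\rho_n)$ is tight (all supported in the fixed ball $\overline{B}(0,K)$), so Prohorov's theorem gives a weakly-$\star$ convergent subsequence $\tilde\rho_{n_k} \rightharpoonup \rho \in \P(\R^d)$, with $\supp\rho \subseteq \overline{B}(0,K)$. By the weak-$\star$ lower semi-continuity from Lemma \ref{lem:weak-lsc},
\begin{equation*}
E(\rho) \leq \liminf_{k\to\infty} E(\tilde\rho_{n_k}) = \liminf_{k\to\infty} E_{n_k}.
\end{equation*}

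It remains to show $E(\rho) \leq E(\mu)$ for every $\mu \in \P(\R^d)$. The case $E(\mu) = +\infty$ is trivial, so assume $E(\mu) < +\infty$. For each $j \in \N$ with $\mu(B(0,j)) > 0$ define the truncation $\mu_j := \mu(B(0,j))^{-1}\chi_{B(0,j)}\mu \in \P_j(\R^d)$. Since $\rho_j$ minimises on $\P_j(\R^d)$, we have $E_j \leq E(\mu_j)$. The key technical point is the convergence $E(\mu_j) \to E(\mu)$ as $j \to \infty$: this follows by splitting $W = (W - W_\mathrm{min}) + W_\mathrm{min}$ and applying monotone (or dominated) convergence to the non-negative integrand $W - W_\mathrm{min}$, which lies in $L^1(\mu \otimes \mu)$ because $E(\mu) < +\infty$ and $W \geq W_\mathrm{min}$ (Hypothesis \ref{hyp:bfb}); note also $\mu(B(0,j)) \to 1$.

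Combining these estimates, for each such $j$, $\liminf_{k\to\infty} E_{n_k} \leq \liminf_{k\to\infty} E(\mu_j) = E(\mu_j)$ (once $n_k \geq j$), and then letting $j \to \infty$ yields $\liminf_{k\to\infty} E_{n_k} \leq E(\mu)$. Together with the lower semi-continuity estimate this gives $E(\rho) \leq E(\mu)$, so $\rho$ is a global minimiser on $\P(\R^d)$. The only delicate step is the truncation convergence $E(\mu_j) \to E(\mu)$, but the lower bound on $W$ makes this routine; conceptually the proof is entirely driven by the uniform compactness provided by Lemma \ref{lem:compact-support}, which is what lets us avoid any mass escaping to infinity along the minimising sequence after translation.
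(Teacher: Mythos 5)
Your proof is correct, but it takes a somewhat different route from the paper's, and in fact uses more machinery than necessary. The paper observes that, because every minimiser $\rho_R$ on $\P_R(\R^d)$ has diameter at most $K$ and can therefore be translated into $\overline{B}(0,K)$, one has $E_K \leq E(\rho_R) = E_R$ for all $R$, i.e.\ $E_R = E_K$ for $R \geq K$. Hence the minimiser $\rho'$ on $\P_K(\R^d)$ from Lemma~\ref{lem:existence-r} is \emph{already} a global minimiser among all compactly supported measures, with no limit passage needed; the only analysis required is the final truncation argument (which you and the paper perform identically, via monotone convergence applied to $W - W_{\mathrm{min}}$) to extend from compactly supported $\mu$ to all of $\P(\R^d)$. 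You instead build a minimising sequence $\tilde\rho_n$ of translated minimisers, invoke Prohorov's theorem and the weak-$\star$ lower semi-continuity from Lemma~\ref{lem:weak-lsc} to pass to a limit $\rho$, and then run the truncation argument. This is perfectly valid --- the diameter bound of Lemma~\ref{lem:compact-support} is exactly what gives tightness after translation --- but the compactness and lower semi-continuity steps are redundant once one notices that the energies $E_R$ stabilise at $E_K$. The paper's version is thus slightly more economical, whereas your version is closer to a generic direct-method argument and perhaps makes the role of the uniform diameter bound as a tightness device more transparent. One very minor wording quibble: ``$\liminf_{k\to\infty} E(\mu_j) = E(\mu_j)$'' is a trivial identity since $E(\mu_j)$ does not depend on $k$; the clean statement is simply that $E_{n_k} \leq E_j \leq E(\mu_j)$ once $n_k \geq j$, so $\liminf_k E_{n_k} \leq E(\mu_j)$.
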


\begin{proof}
  Let $K$ be the bound on the diameter of minimisers on $\P_R(\R^d)$
  for all $R\geq 0$ given by Lemma \ref{lem:compact-support}, and
  consider $\rho'$ a global minimiser on $\P_{K}(\R^d)$. We
    show below that $\rho'$ is in fact a global minimiser in all of
    $\P(\R^d)$.

  Given $\rho \in \P(\R^d)$ with compact support, there exists $R\geq
  0$ such that $\rho\in \P_R(\R^d)$.  Let us take $\rho_R$ a global
  minimiser of $E$ on $\P_R(\R^d)$. Then, we have $E(\rho_R)\leq
  E(\rho)$. Due to translation invariance of $E$, it is clear that
  $E(\rho') \leq E(\rho_R)$ for any $R\geq0$ since the support of
  $\rho$ must have diameter less than $K$, and then it can be
  translated to a measure in $\P_{K}(\R^d)$. Therefore, we conclude
  that $E(\rho') \leq E(\rho)$ for all $\rho \in \P(\R^d)$ with
  compact support.

  We want now to show that $\rho'$ is in fact a global minimiser on
  $\P(\R^d)$. Take any $\rho \in \P(\R^d)$. For $n$ large enough such
  that $M_n := \rho\left(B(0,n)\right) > 0$, let us define the
  sequence $(\rho_n)_n$ by
  \begin{equation}
    \rho_n = \dfrac{1}{M_n} \chi_{B(0,n)} \rho.
  \end{equation}
  Then
  \begin{align*}
    E(\rho_n) -\frac{W_\mathrm{min}}{2}
    &= \dfrac{1}{2} \irdrd (W(x - y)-W_\mathrm{min})
    \d \rho_n(x) \d \rho_n(y)
    \\
    &= \dfrac{1}{2M_n^2} \irdrd \chi_{B(0,n)^2}(x,y)
    (W(x - y)-W_\mathrm{min}) \d \rho(x) \d \rho(y).
  \end{align*}
  Applying the Lebesgue monotone convergence theorem, we get
  \begin{equation*}
    E(\rho_n) \xrightarrow[n\to\infty]{}
    \dfrac{1}{2}\irdrd W(x - y) \d \rho(x) \d \rho(y) = E(\rho).
  \end{equation*}
  Moreover, since $\rho_n \in \P_n(\R^d)$ for all $n$ large enough has compact support, we
  have by above that $E(\rho_n) \geq E(\rho')$. Hence $E(\rho) \geq
  E(\rho')$. Therefore $\rho'$ is a global minimiser on $\P(\R^d)$.
\end{proof}

\subsection{Support compactness of minimisers}
\label{subsec:all-compact-support}

The previous section shows the existence of a compactly
supported global minimiser among all probability measures. However,
this does not exclude existence of a global minimiser without compact
support. Corollary \ref{cor:complement} shows that any global minimiser on
$\P(\R^d)$ is actually compactly supported (and hence, due to Lemma
\ref{lem:compact-support}, has support with diameter less than or
equal to $K$), thus finishing the proof of Theorem
\ref{thm:main}. Its proof is based on very similar reasonings used
throughout Lemma \ref{lem:no-small-isolated-mass}.

\begin{proof}[Proof of Corollary \ref{cor:complement}]
  By Remark \ref{rem:constant-general} and Hypothesis
  \ref{hyp:instability} we have that $W \ast \rho = 2E(\rho)$
  $\rho$-almost everywhere and we can take $A'$ such that $E(\rho) <
  A' < \frac12 W_\infty$. Then, similarly to the proof
  of Lemma \ref{lem:no-small-isolated-mass}, we show that for all $x_0
  \in \supp\rho$,
  \begin{equation*}
    \int_{B(x_0,r'')} \d \rho(x) \geq \dfrac{A' - E(\rho)}{A' - \frac 12 W_\mathrm{min}} =: m' > 0,
  \end{equation*}
  where $r''$ can be found as in the proof of Lemma
  \ref{lem:no-small-isolated-mass}. The result follows immediately from a contradiction argument. Indeed, suppose that $\rho$ is not compactly supported. Then we can choose a sequence of $\lceil1/m'\rceil + 1$ points in its support, where $\lceil\cdot\rceil$ is the ceiling function, such that the balls with centres these points and radii $r''$ do not intersect. By the inequality above this implies that the total mass of $\rho$ is greater than 1, contradicting the fact that $\rho$ is a probability measure.
\end{proof}

\subsection{Corollary for local minimisers}
\label{subsec:corollary-local-minimisers}

Under Hypotheses \ref{hyp:bfb} to \ref{hyp:growth}, Theorem
\ref{thm:main} trivially ensures existence of compactly supported
local minimisers in any topology. However, it is not sufficient to
show that any local minimiser must have compact support. Let us
restrict ourselves to local minimisers with respect to the quadratic Wasserstein
distance $d_2$ (for a definition, see \cite[Section 2]{BCLR2} for example). We know by \cite[Theorem 4(i)]{BCLR2} that, under
Hypotheses \ref{hyp:bfb} to \ref{hyp:sym} and \ref{hyp:lsc}, if $\rho
\in \P(\R^d)$ is a $d_2$-local minimiser with $E(\rho) < +\infty$,
then it satisfies $W \ast \rho = 2E(\rho)$ $\rho$-almost everywhere:
that is, Lemma \ref{lem:constant} (and Remark
\ref{rem:constant-general}) is true for $d_2$-local minimisers on $\P(\R^d)$. We
give here a generalisation of Corollary \ref{cor:complement} by restricting the instability condition of Theorem
\ref{thm:main} to the subset of $d_2$-local minimisers:

\begin{cor}
  Assume that $W\:\R^d \to \R \cup \{+\infty\}$ satisfies Hypotheses
  \ref{hyp:bfb}--\ref{hyp:sym} and \ref{hyp:lsc},
  and is such that $W_\infty := \lim_{|x| \to \infty}
  W(x)$ exists (being possibly equal to $+\infty$). Suppose moreover that there exists a $d_2$-local minimiser $\rho \in \P(\R^d)$ of the energy
  \eqref{eq:energy} with $E(\rho) < \frac12 W_\infty$. Then $\rho$ is
  compactly supported.
\label{cor:local-minimisers}
\end{cor}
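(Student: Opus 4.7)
The strategy is to mimic the proof of Corollary \ref{cor:complement} almost verbatim, observing that the only way Hypothesis \ref{hyp:instability} was used there was (i) to guarantee validity of the Euler-Lagrange identity $W\ast\rho = 2E(\rho)$ $\rho$-almost everywhere, and (ii) to produce a number strictly between $E(\rho)$ and $\tfrac12 W_\infty$. Both ingredients are available here: the Euler-Lagrange condition for $d_2$-local minimisers with finite energy is the content of \cite[Theorem 4(i)]{BCLR2}, as recalled in the paragraph preceding the statement; and the assumption $E(\rho) < \tfrac12 W_\infty$ in the corollary is precisely what is needed to pick some $A'$ with $E(\rho) < A' < \tfrac12 W_\infty$. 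Note that $E(\rho) < +\infty$ is guaranteed by $E(\rho) < \tfrac12 W_\infty$ together with Hypothesis \ref{hyp:bfb}.

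Concretely, I would first choose such an $A'$ and, using the existence of the limit $W_\infty$ (allowing $W_\infty = +\infty$, in which case $2A'$ is any finite number below $W_\infty$), select $r'' > 0$ such that $W(x) \geq 2A'$ whenever $|x| \geq r''$. Then I would reproduce Step 1 of the proof of Lemma \ref{lem:no-small-isolated-mass}: for $\rho$-almost every $z \in \supp\rho$,
\begin{equation*}
2E(\rho) = \ird W(z-x)\,\mathrm d\rho(x) \geq W_{\min}\!\!\int_{B(z,r'')}\!\mathrm d\rho(x) + 2A'\!\int_{\R^d\setminus B(z,r'')}\!\mathrm d\rho(x),
\end{equation*}
which after rearranging yields $\rho(B(z,r'')) \geq m' := (A'-E(\rho))/(A' - \tfrac12 W_{\min}) > 0$. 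The extension from $\rho$-almost every $z$ to every $x_0 \in \supp\rho$ is then done exactly as in Step 2 of Lemma \ref{lem:no-small-isolated-mass}, by enlarging the radius slightly and splitting according to whether $\rho(B(x_0,\delta)\setminus\{x_0\})$ vanishes or not.

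Finally, to conclude compactness of the support, I would argue by contradiction as in the proof of Corollary \ref{cor:complement}: if $\supp\rho$ were unbounded, one could extract $\lceil 1/m'\rceil + 1$ points $x_0,\dots,x_N \in \supp\rho$ pairwise separated by more than $2r''$, so that the balls $B(x_i, r'')$ are disjoint; applying the lower bound on each ball would force $\rho(\R^d) > 1$, contradicting $\rho \in \P(\R^d)$.

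There is no genuine obstacle here: the whole content of the generalisation lies in observing that Lemma \ref{lem:constant} and Remark \ref{rem:constant-general} carry over to $d_2$-local minimisers via \cite[Theorem 4(i)]{BCLR2}. Once the Euler-Lagrange equation is in hand, the local/global distinction plays no role in the remainder of the argument, since the entire reasoning proceeds purely from the pointwise identity $W\ast\rho = 2E(\rho)$ on $\supp\rho$ and the comparison with $W_\infty$, without ever perturbing $\rho$ in the $d_2$-topology.
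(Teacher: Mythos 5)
Your proposal is correct and takes essentially the same route as the paper: it replays the proof of Corollary~\ref{cor:complement}, replacing the use of Hypothesis~\ref{hyp:instability} by the explicit assumption $E(\rho) < \tfrac12 W_\infty$ (which also gives $E(\rho)<+\infty$), and invoking \cite[Theorem 4(i)]{BCLR2} to supply the Euler--Lagrange identity $W\ast\rho = 2E(\rho)$ $\rho$-a.e.\ for $d_2$-local minimisers, after which the local-mass estimate and the disjoint-balls contradiction go through unchanged. The only cosmetic point is to be careful that the radius used in the final contradiction is the slightly enlarged radius produced in Step~2 of Lemma~\ref{lem:no-small-isolated-mass} (so that the lower bound holds at \emph{every} point of the support), but the paper's own proof of Corollary~\ref{cor:complement} is equally brief on this and the argument is identical.
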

\begin{proof}
  The proof is direct by following the arguments of the proof of Corollary \ref{cor:complement} given in Section \ref{subsec:all-compact-support}, and using the result \cite[Theorem
  4(i)]{BCLR2} for $d_2$-local minimisers and the fact that $E(\rho) <
  \frac12 W_\infty$.
\end{proof}

\begin{rem}
	For potentials with $\lim_{|x| \to \infty} W(x) = +\infty$, the instability condition of Corollary \ref{cor:local-minimisers} is automatically verified by any non-trivial $d_2$-local minimiser (that is a local minimiser with finite energy). This is the case, for instance, of the power-law potential given in Proposition \ref{prop:examples}\eqref{item:power} when $a \geq 0$.
\end{rem}

\section{Examples and non-existence of minimisers}
\label{sec:examples-nonexistence}

\subsection{Examples of potentials with minimisers}
\label{subsec:examples}

We want to give explicit examples of potentials $W$ satisfying the
hypotheses of Theorem \ref{thm:main}. To this end we first state a
lemma which gives sufficient conditions for a potential to be
unstable, and therefore for Hypothesis \ref{hyp:instability} to
hold. A similar result can be found in \cite[Section 3.2]{Ruelle},
where alternative conditions are also given for a potential to be
unstable. In the following the subscripts $_+$ and $_-$ stand for
positive and negative part, respectively.
\begin{lem}
  Let $W$ be a potential satisfying Hypotheses
  \ref{hyp:bfb} and \ref{hyp:integrable}, and assume furthermore that $W_\infty :=
  \lim_{|x| \to \infty} W(x)$ exists (being possibly equal to
  $+\infty$).
  \begin{enumerate}[(i)]
  \item \label{item:infinite} If $W_\infty = +\infty$, then $W$ is
    unstable.
  \item \label{item:finite} If $W_\infty < +\infty$, call $\tilde{W}
    := W - W_\infty$. If $\tilde{W}_+$ is
    integrable and $\ird \tilde{W} < 0$ (being possibly equal to $-\infty$),
    then $W$ is unstable.
  \end{enumerate}
  \label{lem:instability-conditions}
\end{lem}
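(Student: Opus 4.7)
\textbf{Proof plan for Lemma \ref{lem:instability-conditions}.} The aim in each case is to exhibit a probability measure $\rho \in \P(\R^d)$ with $E(\rho) < \tfrac12 W_\infty$. For part \eqref{item:infinite}, the task is easy: take $\rho_0 = \chi_{B(0,1)}/|B(0,1)|$. Since $\{x-y : x,y \in B(0,1)\} \subset B(0,2)$, Hypothesis \ref{hyp:integrable} yields $E(\rho_0) < +\infty = \tfrac12 W_\infty$, so $W$ is unstable.

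For part \eqref{item:finite}, the plan is to use uniform distributions on balls of growing radius. Set $\rho_R := \chi_{B(0,R)}/|B(0,R)|$ and write
\begin{equation*}
  2E(\rho_R) - W_\infty \;=\; \irdrd \tilde{W}(x-y) \d\rho_R(x) \d\rho_R(y).
\end{equation*}
A change of variables $z = x - y$ combined with Fubini (justified below) rewrites this as
\begin{equation*}
  2E(\rho_R) - W_\infty \;=\; \frac{1}{|B(0,R)|} \ird \tilde{W}(z) f_R(z) \d z,
  \qquad f_R(z) := \frac{|B(0,R) \cap (B(0,R) - z)|}{|B(0,R)|}.
\end{equation*}
The weight $f_R$ satisfies $0 \leq f_R \leq 1$, is supported in $B(0,2R)$, and converges pointwise to $1$ on $\R^d$ as $R \to \infty$. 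Hence the sign of $2E(\rho_R) - W_\infty$ is the sign of $\int \tilde{W}(z) f_R(z) \d z$.

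The key step is then to show that $\int \tilde{W} f_R \to \int \tilde{W}$ (allowing the value $-\infty$) as $R \to \infty$. Split $\tilde W = \tilde W_+ - \tilde W_-$: since $\tilde{W}_+ \in L^1(\R^d)$ by hypothesis and $|\tilde{W}_+ f_R| \leq \tilde{W}_+$, dominated convergence gives $\int \tilde{W}_+ f_R \to \int \tilde{W}_+ < +\infty$. The negative part is handled by Fatou's lemma applied to the non-negative sequence $\tilde{W}_- f_R$: $\liminf_R \int \tilde{W}_- f_R \geq \int \tilde{W}_-$, and since also $\int \tilde W_- f_R \leq \int \tilde W_-$, one gets $\int \tilde{W}_- f_R \to \int \tilde{W}_- \in (0, +\infty]$. (Finiteness of each $\int \tilde W_\pm f_R$, needed to subtract them, follows from Hypothesis \ref{hyp:integrable} together with the compact support of $f_R$, which also legitimises the initial Fubini step.) Combining the two yields $\int \tilde{W} f_R \to \int \tilde W < 0$ in the extended sense.

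Consequently, $\int \tilde{W}(z) f_R(z) \d z < 0$ for all $R$ large enough, so $E(\rho_R) < \tfrac12 W_\infty$ and $W$ is unstable. The main subtlety, and essentially the only point requiring care, is that the hypothesis $\int \tilde{W} < 0$ allows $\int \tilde{W} = -\infty$ (i.e.\ $\tilde{W}_- \notin L^1$), which forbids a direct application of dominated convergence to $\tilde{W}$; the splitting into positive and negative parts above circumvents precisely this issue.
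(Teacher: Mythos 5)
Your proof is correct and takes essentially the same route as the paper: same choice of trial measures $\rho_R = \chi_{B(0,R)}/|B(0,R)|$, the same rewriting of $2E(\rho_R) - W_\infty$ as a weighted integral $\int \tilde W\, f_R$ with a weight $f_R \le 1$ tending pointwise to $1$ (the paper calls it $\phi_R$), and the same split of $\tilde W$ into positive and negative parts. The only cosmetic difference is that you pass to the limit on $\int \tilde W_- f_R$ via Fatou's lemma combined with the trivial upper bound $\int \tilde W_- f_R \le \int \tilde W_-$, which handles the finite and infinite cases of $\int \tilde W_-$ in one stroke, whereas the paper treats these two cases separately using dominated convergence and monotone convergence (the latter requiring the observation that $R \mapsto \phi_R$ is pointwise monotone); your variant avoids needing that monotonicity.
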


\begin{proof}
\underline{\eqref{item:infinite}.} This case is trivial since, by Hypothesis \ref{hyp:integrable}, any uniform distribution on a given ball has finite energy.
\medskip

\underline{\eqref{item:finite}.} Let us define, for all $R > 0$, the following probability measure:
\bes
	\rho_R = \dfrac{1}{|B(0,R)|} \chi_{B(0,R)}.
\ees
Then compute easily
\begin{align*}
	E(\rho_R) - \frac12 W_\infty &= \dfrac{1}{2|B(0,R)|} \ird \phi_R(x) \tilde W(x) \d x,
\end{align*}
where $\phi_R := \frac{1}{|B(0,R)|}\chi_{B(0,R)} \ast \chi_{B(0,R)}$. Remark that $\phi_R \leq 1$ for all $R > 0$ and $(\phi_R\tilde W)_{R>0}$ converges pointwise to $\tilde W$ on $\R^d$ as $R \to \infty$. Assume first that $\tilde W_-$ is integrable, i.e., by our hypothesis on $\tilde W_+$, $\tilde W$ is integrable. Then, by the Lebesgue dominated convergence theorem we get
\bes
	\ird \phi_R(x)\tilde W(x) \d x \xrightarrow[R \to \infty]{} \ird \tilde W(x) \d x < 0.
\ees
Thus there exists $R$ large enough such that $E(\rho_R) < \frac12 W_\infty$, which shows the results. Now assume that $\tilde W_-$ is not integrable, i.e., $\ird \tilde W = -\infty$. By above we have
\begin{align*}
	E(\rho_R) - \frac12 W_\infty &= \dfrac{1}{2|B(0,R)|} \left(\ird \phi_R(x)\tilde W_+(x) \d x + \ird \phi_R(x) \tilde W_-(x) \d x\right)\\
	&\leq \dfrac{1}{2|B(0,R)|} \left(\ird \tilde W_+(x) \d x + \ird \phi_R(x)\tilde W_-(x) \d x\right).
\end{align*}
Since $(\phi_R\tilde W_-)_{R>0}$ is non-increasing, non-positive and converges pointwise to $\tilde W_-$ on $\R^d$ as $R \to \infty$, the Lebesgue monotone convergence theorem yields
\bes
	\ird \phi_R(x) \tilde W_-(x) \d x \xrightarrow[R \to \infty]{} \ird \tilde W_-(x) \d x = -\infty.
\ees
Therefore, since $\ird \tilde W_+$ is finite, there exists $R$ large
enough such that $E(\rho_R) < \frac12 W_\infty$, which ends the proof.
\end{proof}

In the following proposition we use the result above to find explicit
potentials satisfying all Hypotheses \ref{hyp:bfb} to
\ref{hyp:growth}, and therefore for which Theorem \ref{thm:main} is
applicable.  

\begin{prop}
	Consider the following potentials for all $x \in \R^d$ and $C_A,C_R,\ell_A,\ell_R > 0$:
\begin{enumerate}[(i)]
	\item \label{item:power} \emph{(Power-law potential)} $W(x) = \dfrac{|x|^a}{a} - \dfrac{|x|^b}{b}$ with  $-d < b < a$,
	\item \label{item:morse} \emph{(Morse potential)} $W(x) = C_Re^{-\frac{|x|}{\ell_R}} - C_Ae^{-\frac{|x|}{\ell_A}}$ with $\ell_R < \ell_A$ and $\frac{C_R}{C_A} < \left(\frac{\ell_A}{\ell_R}\right)^d$,
\end{enumerate}
with the convention $\frac{|x|^0}{0} = \log|x|$. Each of these potentials satisfies Hypotheses \ref{hyp:bfb} to \ref{hyp:growth}.
\label{prop:examples}
\end{prop}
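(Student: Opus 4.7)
The plan is to verify each of Hypotheses \ref{hyp:bfb}--\ref{hyp:growth} for both potentials in turn. Hypotheses \ref{hyp:bfb}, \ref{hyp:integrable}, \ref{hyp:sym} and \ref{hyp:lsc} are in both cases essentially routine: radial symmetry is immediate from the fact that both potentials are functions of $|x|$; lower semi-continuity follows since each potential is continuous on $\R^d \setminus \{0\}$ and, where it blows up at the origin (for the power-law case when $b \leq 0$), does so to $+\infty$; local integrability reduces to the observation that $|x|^a$ and $|x|^b$ lie in $L^1_{\mathrm{loc}}(\R^d)$ precisely when $a,b > -d$, which is included in the hypothesis $-d<b<a$, whereas the Morse potential is continuous on all of $\R^d$; and boundedness below follows by inspecting the behaviour at $0$ and at $\infty$ (in the power-law case the analysis splits into subcases according to the signs of $a$ and $b$, but in every subcase $W$ has a finite infimum on $\R^d$, possibly after subtracting a constant to ensure $W_{\min} < 0$, which is allowed by the translation invariance mentioned in the introduction).

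For Hypothesis \ref{hyp:growth}, the remark following its statement reduces the task to showing that each potential is radially strictly increasing outside some ball. For the power-law potential, differentiating $r \mapsto r^a/a - r^b/b$ yields $r^{a-1} - r^{b-1}$, which is positive as soon as $r > 1$ since $a > b$. For the Morse potential, differentiating gives $-(C_R/\ell_R) e^{-r/\ell_R} + (C_A/\ell_A) e^{-r/\ell_A}$, and since $\ell_R < \ell_A$ the first term decays faster than the second, so this derivative is positive for $r$ sufficiently large.

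The main content is the verification of Hypothesis \ref{hyp:instability}, which I carry out using Lemma \ref{lem:instability-conditions}. For the power law, whenever $a \geq 0$ one has $W_\infty = +\infty$, and so instability is immediate from part \eqref{item:infinite}. When $a < 0$ (which forces $b < a < 0$), one has $W_\infty = 0$ and $\tilde{W} = W$; here $\tilde{W}_+$ is supported in a bounded region (since $W < 0$ for large $|x|$, the dominant term there being $|x|^a/a<0$) and is controlled near $0$ by $-|x|^b/b$, hence integrable by $b > -d$; and the integral $\ird \tilde{W}$ equals $-\infty$ because the dominant term at infinity is $|x|^a/a$ with $\int_{|x|>1} |x|^a \d x = +\infty$ (as $a > -d$) and $1/a < 0$. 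Part \eqref{item:finite} of Lemma \ref{lem:instability-conditions} then applies. For the Morse potential, $W_\infty = 0$, $\tilde{W} = W$, and $\tilde{W}_+$ is bounded above by $C_R e^{-|x|/\ell_R}$, hence integrable. The computation $\ird e^{-|x|/\ell}\d x = c_d \ell^d$ for some constant $c_d > 0$ depending only on $d$ yields
\begin{equation*}
\ird \tilde{W}(x) \d x = c_d\bigl(C_R \ell_R^d - C_A \ell_A^d\bigr),
\end{equation*}
which is strictly negative exactly under the condition $C_R/C_A < (\ell_A/\ell_R)^d$ assumed in the statement. Lemma \ref{lem:instability-conditions}\eqref{item:finite} then gives instability.

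The main (mild) obstacle in this argument is the power-law case $a < 0$, where one must handle an integral $\ird W$ in which both summands have divergent integrals: the resolution is to observe that one does not need convergence but only the sign, and the asymptotic dominance of $|x|^a/a$ over $|x|^b/b$ at infinity (since $a > b$ and both are negative) forces the integral to equal $-\infty$, which Lemma \ref{lem:instability-conditions}\eqref{item:finite} explicitly allows.
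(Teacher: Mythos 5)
Your proof is correct and follows essentially the same strategy as the paper: verify Hypotheses \ref{hyp:bfb}--\ref{hyp:lsc} and \ref{hyp:growth} by direct inspection, and establish Hypothesis \ref{hyp:instability} through Lemma \ref{lem:instability-conditions}, splitting the power-law case on the sign of $a$ and computing $\ird W$ explicitly for the Morse potential. Your treatment of the power-law case with $a<0$ is in fact slightly more careful than the paper's phrasing: you correctly identify $|x|^a/a$ (not $-|x|^b/b$) as the dominant negative contribution near infinity that makes $\ird W = -\infty$, and $b>-d$ (rather than $a>-d$) as the reason $W_+$ is integrable near the origin, which fixes what appears to be a typo in the published argument. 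The parenthetical about subtracting a constant to arrange $W_{\min}<0$ is unnecessary — any finite lower bound can simply be replaced by a smaller negative one — but this does not affect correctness.
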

\begin{proof}
  \underline{\eqref{item:power}.} Hypotheses \ref{hyp:sym} and
  \ref{hyp:lsc} are trivially respected, as well as Hypotheses
  \ref{hyp:integrable} and \ref{hyp:growth} since $-d < b <
  a$. Furthermore, since $a > b$, we have that $W$ satisfies
  Hypothesis \ref{hyp:bfb}. We are only left to show Hypothesis
  \ref{hyp:instability}. Let us first assume $a \geq 0$. Then $W(x)
  \to +\infty =: W_\infty$ as $|x| \to \infty$. Therefore, by Lemma
  \ref{lem:instability-conditions}\eqref{item:infinite} we have that
  $W$ satisfies Hypothesis \ref{hyp:instability}. On the other hand,
  in the case $a < 0$ we have $W(x) \to 0 =: W_\infty$ as $|x| \to
  \infty$. Since $a > b$, $W$ is asymptotic to $-|x|^b/b$ as $|x| \to
  \infty$, while $W_+$ is integrable since $-d <
  a$. This shows that $\ird W = -\infty$, so Lemma
  \ref{lem:instability-conditions}\eqref{item:finite} applies
  to show that $W$ satisfies Hypothesis \ref{hyp:instability}.
  \medskip

  \underline{\eqref{item:morse}.} Hypotheses \ref{hyp:integrable},
  \ref{hyp:sym} and \ref{hyp:lsc} are trivially respected, as well as
  Hypotheses \ref{hyp:bfb} and \ref{hyp:growth} by our assumptions on
  the parameters. Furthermore $W(x) \to 0 =: W_\infty$ as $|x| \to
  \infty$, and one may check that
  \begin{align*}
    \ird W(x) \d x 
    =
    C'\Gamma(d)(C_R\ell_R^d - C_A\ell_A^d),
  \end{align*}
  where $C' > 0$ is a constant coming from a spherical change of
  variables and $\Gamma$ is the Gamma-function. Therefore $\ird W < 0$
  for the given range of parameters, and by Lemma
  \ref{lem:instability-conditions}\eqref{item:finite} we obtain that
  $W$ satisfies Hypothesis \ref{hyp:instability}.
\end{proof}

\subsection{Non-existence of minimisers}
\label{subsec:sufficient-non-existence}

In the work of Simione, Slep\v{c}ev \& Topaloglu \cite{SST} conditions
are given for the non-existence of minimisers of the interaction
energy. Here, for completeness, we rewrite their result in Theorem \ref{thm:non-existence} adapting it to our hypotheses and using a slightly
different language. Let us consider the following assumption on the potential:
\begin{hyp}
  The limit \eqref{eq:Einf} exists and $W$ is such that there is $\rho
  \in \P(\R^d)$ with $E(\rho) \leq \frac12 W_\infty$.
  \label{hyp:instability-equality}
\end{hyp}
This is a formulation of Hypothesis \ref{hyp:instability} generalised
to the equality case. Our main result, as given in Theorem
\ref{thm:main}, can now be extended to the following:
\begin{thm}
  Suppose that the potential $W$ satisfies Hypotheses
  \ref{hyp:bfb}--\ref{hyp:sym}, \ref{hyp:lsc} and
  \ref{hyp:growth}. Also assume it is such that the limit
  \eqref{eq:Einf} exists and the positive part of $\tilde{W}:= W -
  W_\infty$ is integrable if $W_\infty < +\infty$. Then $E$ admits a
  global minimiser if and only if $W$ satisfies Hypothesis
  \ref{hyp:instability-equality}.
\label{thm:non-existence}
\end{thm}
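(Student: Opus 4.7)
The plan is to combine Theorem \ref{thm:main} with a direct mass-spreading argument yielding the universal bound $\inf_{\P(\R^d)} E \leq \tfrac12 W_\infty$. Once that bound is in hand, both directions of the equivalence follow essentially for free.

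First I would dispose of the case $W_\infty = +\infty$: here Hypothesis \ref{hyp:instability-equality} holds automatically (any $\rho$ with finite energy, provided by Hypothesis \ref{hyp:integrable}, satisfies $E(\rho) < +\infty = \tfrac12 W_\infty$), and Hypothesis \ref{hyp:instability} holds for the same reason, so Theorem \ref{thm:main} directly yields a global minimiser. Thus I may reduce to the case $W_\infty < +\infty$.

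The central step is the following spreading construction. Pick $\mu \in \P(\R^d)$ with compact support and $E(\mu) < +\infty$ (which exists by Hypothesis \ref{hyp:integrable}, for instance the uniform measure on a ball). Given $N \geq 1$ and points $y_1,\dots,y_N \in \R^d$, set $\nu_N := \tfrac{1}{N} \sum_{i=1}^N T_{-y_i}(\mu)$; expanding and using the translation invariance of $E$ to deal with the diagonal terms $i=j$ yields
\begin{equation*}
E(\nu_N) = \frac{1}{N} E(\mu) + \frac{1}{2N^2} \sum_{i \neq j} \iint_{\R^d\times\R^d} W(z-z'+y_i-y_j) \d\mu(z) \d\mu(z').
\end{equation*}
Since $\mu$ has compact support, the estimate $|z-z'+y_i-y_j| \geq |y_i-y_j| - \diam(\supp\mu)$ together with $W(x) \to W_\infty$ as $|x| \to \infty$ forces uniform convergence of the $(i,j)$-th integrand to $W_\infty$ as $|y_i-y_j| \to \infty$. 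So for each fixed $N$, by taking the $y_i$'s pairwise sufficiently distant, $E(\nu_N)$ can be made arbitrarily close to $\tfrac{1}{N} E(\mu) + \tfrac{N-1}{2N} W_\infty$; letting $N \to \infty$ then gives $\inf_{\P(\R^d)} E \leq \tfrac12 W_\infty$.

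Both directions of the theorem now follow. For the ``only if'' direction, if $\rho^\ast$ is a global minimiser then $E(\rho^\ast) = \inf E \leq \tfrac12 W_\infty$, so Hypothesis \ref{hyp:instability-equality} holds. For the ``if'' direction, assume Hypothesis \ref{hyp:instability-equality}: if some $\mu'$ has $E(\mu') < \tfrac12 W_\infty$ then Hypothesis \ref{hyp:instability} is satisfied and Theorem \ref{thm:main} furnishes a minimiser; otherwise $E(\nu) \geq \tfrac12 W_\infty$ for every $\nu \in \P(\R^d)$, and any $\rho$ with $E(\rho) \leq \tfrac12 W_\infty$ (given by the hypothesis) must satisfy $E(\rho) = \tfrac12 W_\infty = \inf E$ and is therefore a global minimiser. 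The only genuine obstacle is justifying the uniform convergence in the spreading step, but as noted this is immediate from the compactness of $\supp\mu$ and the very definition of $W_\infty$; no recourse to the integrability of $\tilde W_+$ seems to be required for the argument I am sketching.
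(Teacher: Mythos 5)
Your proof is correct, and the necessity (``only if'') direction takes a genuinely different and arguably cleaner route than the paper. The paper establishes $\inf_{\P(\R^d)} E \leq \tfrac12 W_\infty$ by computing the energy of uniform distributions $\rho_R$ on large balls $B(0,R)$, which requires rewriting $E(\rho_R) - \tfrac12 W_\infty$ as $\tfrac{1}{2|B(0,R)|}\ird \phi_R \tilde W$ and appealing to the monotone/dominated convergence arguments in Lemma \ref{lem:instability-conditions}; this is where the hypothesis that $\tilde W_+$ is integrable enters (as does Hypothesis \ref{hyp:growth}, used to see that $\tilde W \leq 0$ far out so that $\ird\tilde W$ is well-defined in $[-\infty,\infty)$). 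Your spreading construction $\nu_N = \tfrac1N\sum_i T_{-y_i}(\mu)$ reaches the same inequality using only compactness of $\supp\mu$ and the definition of $W_\infty$, so you are right that neither the integrability of $\tilde W_+$ nor Hypothesis \ref{hyp:growth} is needed for that direction — this is a real simplification, and in fact the hypothesis on $\tilde W_+$ is already redundant in the theorem statement given Hypotheses \ref{hyp:integrable} and \ref{hyp:growth}, as the paper's own proof inadvertently shows. The $W_\infty=+\infty$ reduction and the sufficiency direction (case split between strict instability, handled by Theorem \ref{thm:main}, and the equality case, where the $\rho$ from Hypothesis \ref{hyp:instability-equality} is itself the minimiser) match the paper's reasoning.
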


\begin{proof}
  \underline{Sufficiency.} The sufficiency is almost direct by Theorem
  \ref{thm:main}. However we still need to cover the equality case. By
  contradiction, suppose that there is $\rho \in \P(\R^d)$ with
  $E(\rho) = \frac12 W_\infty$ and that there exists no global minimiser of
  the energy. Then there must be a probability measure $\rho'$ such
  that $E(\rho') < E(\rho) = \frac12 W_\infty$. Now we can apply Theorem
  \ref{thm:main} to get that there exists a global minimiser for $E$,
  which contradicts the non-existence assumption.  \medskip

  \underline{Necessity.} Suppose that there exists a global minimiser
  $\rho' \in \P(\R^d)$ for the energy \eqref{eq:energy}. If
  $W_\infty = +\infty$, then the result is trivial. Assume that
  $W_\infty < +\infty$. By definition of $W_\infty$, we
  have $\lim_{|x| \to \infty}\tilde{W}(x) = 0$. Then, since $W$
  satisfies Hypothesis \ref{hyp:growth}, we know there exists $r > 0$
  large enough such that $\tilde{W}(x) \leq 0$ for all $x \in \R^d$
  such that $|x| > r$. Thus we know that $\ird \tilde{W} \neq +\infty$
  by Hypothesis \ref{hyp:integrable}. Now, since $\tilde{W}_+$ is
  integrable, by proceeding as in the proof of Lemma
  \ref{lem:instability-conditions} and using the same notation, we
  have two cases: either $\ird \tilde W$ is finite and therefore
  $E(\rho_R) \to \frac12 W_\infty$ as $R \to \infty$, or $\ird \tilde W
  = -\infty$ and therefore there is $R$ large enough such that
  $E(\rho_R) < \frac12 W_\infty$. In both cases we get $E(\rho') \leq
  \frac12 W_\infty$. Hence result.
\end{proof}

\begin{rem}
  Theorem \ref{thm:non-existence} shows that, under its
    hypotheses, the only stable potentials
  for which global minimisers exist are the ones such that the
  equality case in the stability definition holds, i.e., the ones such
  that you can find $\rho \in \P(\R^d)$ with $E(\rho) = \frac 12
  W_\infty$. For radially symmetric potentials this
  is also true without Hypothesis \ref{hyp:growth}, as proven in
  \cite{SST}. An example of such potential is the following: \bes W(x)
  = |x|^2 e^{-|x|^2}, \qquad x \in \R^d.  \ees Indeed $W$ is
    radially symmetric and satisfies all the hypotheses of
    Theorem \ref{thm:non-existence} but Hypothesis \ref{hyp:growth}, and is stable with obviously $E(\delta_0) =  \frac 12 W(0) = 0 = \frac 12 W_\infty$, where $\delta_0$ is the
  Dirac measure centred at the origin.
\end{rem}

\subsection*{Acknowledgements}

The authors would like to thank D. Chafa\"{\i} for mentioning some
references regarding the circular law in probability. J.~A. Ca\~nizo
acknowledges support from projects MTM2011-27739-C04-02
and the Marie-Curie CIG project
KineticCF. J.~A. Carrillo acknowledges support from projects
MTM2011-27739-C04-02, the Royal Society through a Wolfson Research Merit
Award, and the Engineering and Physical Sciences Research Council (UK)
grant number EP/K008404/1.


\end{document}